\newtheorem{twr}{Theorem}[section]
\newtheorem{lem}[twr]{Lemma}
\newtheorem{conj}[twr]{Conjecture}
\newtheorem{coll}[twr]{Corollary}
\newtheorem{problem}[twr]{Problem}
\theoremstyle{remark}
\numberwithin{equation}{section}
\begin{document}

\title{Equilateral dimension of some classes of normed spaces}

\commby{}


\author[T. Kobos]{Tomasz Kobos}
\address{Faculty of Mathematics and Computer Science \\ Jagiellonian University \\ Lojasiewicza 6, 30-348 Krakow, Poland}
\email{Tomasz.Kobos@im.uj.edu.pl}

\subjclass[2010]{Primary 46B85; Secondary 46B20; 52C17; 52A15; 52A20}

\keywords{Equilateral set, equilateral dimension, equidistant points, touching translates, Orlicz space, symmetric norm, Brouwer Fixed Point Theorem}

\begin{abstract}
The equilateral dimension of a normed space is the maximal number of pairwise equidistant points of this space. The aim of this paper is to study the equilateral dimension of certain classes of finite dimensional normed spaces. A well-known conjecture states that the equilateral dimension of any $n$-dimensional normed space is not less than $n+1$. By using an elementary continuity argument, we establish it in the following classes of spaces: permutation-invariant spaces, Musielak-Orlicz spaces and in one codimensional subspaces of $\ell^n_{\infty}$. 
For smooth and symmetric spaces, Musielak-Orlicz spaces satisfying an additional condition and every $(n-1)$-dimensional subspace of $\ell^{n}_{\infty}$ we also provide some weaker bounds on the equilateral dimension for every space which is sufficiently close to one of these. This generalizes a result of Swanepoel and Villa concerning the $\ell_p^n$ spaces.
\end{abstract}

\maketitle

\section{Introduction}

Let $X$ be a real $n$-dimensional vector space endowed with a norm $|| \cdot ||$. We say that a set $S \subset X$ is \emph{equilateral}, if there is a $p>0$ such that $||x-y||=p$ for all $x, y \in S, x \neq y$. By $e(X)$ let us denote the \emph{equilateral dimension} of the space $X$, defined as the maximal cardinality of an equilateral set in $X$. We will be concerned with lower bounds on equilateral dimension in certain classes of normed spaces. It is not hard to see that every equilateral set in $X$ corresponds to a family of pairwise touching translates of the unit ball of $X$. It  is widely conjectured (see e.g. \cite{grunbaum}, \cite{petty}, \cite{thompson}, \cite{makeev}, \cite{averkov}) that in any $n$-dimensional space $X$ we can find $n+1$ equidistant points, or equivalently, that every symmetric convex body in $\mathbb{R}^n$ has $n+1$ pairwise touching translates.

\begin{conj}
\label{dimension}
Let $X$ be an $n$-dimensional normed space. Then $e(X) \geq n+1$.
\end{conj}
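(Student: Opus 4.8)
The conjecture is open in full generality, so rather than a complete argument I will describe the continuity strategy that can be pushed through in structured cases and identify exactly where the general case resists. The guiding idea is to fix a combinatorially natural candidate for an $(n+1)$-point configuration, leave a small number of parameters free, and tune them by a continuity argument until all pairwise distances coincide. The whole game is to arrange matters so that the number of distance constraints one must satisfy matches the number of free parameters; when that happens one can hope to close the argument by the Intermediate Value Theorem or, more generally, by the Brouwer Fixed Point Theorem.

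The cleanest instance, which I would treat first as a model, is a permutation-invariant space. Take the standard basis vectors $e_1, \dots, e_n$. Any coordinate permutation $\sigma$ sends $e_i - e_j$ to $e_{\sigma(i)} - e_{\sigma(j)}$, so invariance of the norm forces all the distances $\|e_i - e_j\|$ with $i \neq j$ to equal a single value $d = \|e_1 - e_2\|$. To complete the equilateral set I look for a final point on the symmetry axis, $c_\lambda = \lambda \mathbf{1}$ with $\mathbf{1} = (1, \dots, 1)$: since $c_\lambda$ is fixed by every coordinate permutation, the quantity $\|c_\lambda - e_i\|$ is independent of $i$, and the entire problem collapses to the single scalar equation
\[
f(\lambda) := \|c_\lambda - e_1\| = d .
\]
Here $f$ is continuous with $f(\lambda) \to \infty$ as $|\lambda| \to \infty$, while at the centroid $\lambda = 1/n$ one has $c_{1/n} - e_1 = \tfrac1n \sum_{j \neq 1}(e_j - e_1)$, so the triangle inequality gives $f(1/n) \le \tfrac{n-1}{n}\, d < d$. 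The Intermediate Value Theorem then produces a $\lambda$ with $f(\lambda) = d$, completing the configuration. This is precisely the elementary continuity argument promised in the abstract, and the same scheme, with the symmetry axis replaced by the fixed-point set of the relevant group, is what I would adapt to Musielak-Orlicz and to symmetric smooth spaces.

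For a general $n$-dimensional space the natural generalization is a degree or fixed-point argument: parametrize configurations $(p_0, \dots, p_n)$ after normalizing $p_0 = 0$ and the overall scale, form the \emph{discrepancy map} sending such a configuration to the vector with entries $\|p_i - p_j\| - \|p_1 - p_0\|$, and attempt to show this map has a zero via Brouwer. The main obstacle, and the reason the conjecture remains open, is dimensional. The configuration space has dimension of order $n^2$, whereas the number of distance constraints is $\binom{n+1}{2}$; for small $n$ the counts are benign, but for large $n$ the system is badly overdetermined, and in the absence of any canonical symmetry there is no way to cut the independent equations down to the number of available parameters. My plan, therefore, is to exploit extra structure in each of the target classes: either a symmetry group whose fixed-point set already carries the candidate configuration (collapsing the equations as above), or proximity to a space in which an equilateral set of the required size is known, so that a perturbative fixed-point argument applies. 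This is exactly the restriction to the classes named in the abstract, and identifying, in each class, the correct low-dimensional parametrization on which the continuity argument runs is the step I expect to demand the most care.
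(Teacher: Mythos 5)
The statement you were asked about is the paper's central \emph{open conjecture}: the paper itself does not prove it in general, only in special classes, and your proposal correctly recognizes this rather than claiming a full proof. The one complete argument you do give, for permutation-invariant spaces, is essentially identical to the paper's proof of Theorem 1.4 (the basis vectors are equilateral by symmetry, and the $(n+1)$-st point is found on the diagonal by the Intermediate Value Theorem using the triangle-inequality estimate $f(1/n) \le \frac{n-1}{n}d < d$), and your description of the perturbative Brouwer fixed-point strategy for the remaining classes matches the paper's actual method for its Theorems 1.5, 1.7 and 1.10.
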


This conjecture is proved for $n \leq 4$ (see \cite{makeev} and \cite{petty}) but surprisingly it remains open for all $n \geq 5$. There are some partial results known. Brass in \cite{brass} and Dekster in \cite{dekster}, following the same method, have independently found a general lower bound on the equilateral dimension that goes to infinity with the dimension going to infinity. To establish such a lower bound, they have used the Brouwer Fixed Point Theorem to prove that Conjecture \ref{dimension} holds in every space which is sufficiently close to the Euclidean space. The distance between $n$-dimensional normed spaces is measured by the so called (multiplicative) \emph{Banach-Mazur distance}, defined as $d(X, Y)= \inf ||T|| \cdot ||T||^{-1}$, where the infimum is taken over all linear, invertible operators $T: X \to Y$. They obtained

\begin{twr}[Brass \cite{brass} \& Dekster \cite{dekster}]
\label{brassdekster}
Let $X$ be an $n$-dimensional normed space with the Banach-Mazur distance $d(X,\ell_2^n)\leq
1+\frac{1}{n}$. Then an equilateral set in $X$ of at most $n$ points can be extended to an equilateral one of $n+1$ points. In particular, $e(X)\geq n+1$.
\end{twr}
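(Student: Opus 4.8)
The plan is to prove the extension statement directly by a Brouwer fixed point argument, since the "in particular" conclusion $e(X) \geq n+1$ follows immediately once we know any equilateral set of $n$ points can be extended. So suppose we are given an equilateral set $\{x_1, \dots, x_n\}$ in $X$ with common distance $p$; after translating we may assume these are our points and we seek an $(n+1)$-st point $x_{n+1}$ with $\|x_{n+1} - x_i\| = p$ for all $i$. The natural idea is to look for this new point on the sphere $S$ of radius $p$ (in the norm of $X$) centered at $x_1$, and to set up a self-map of this sphere whose fixed point yields the desired equidistance. The key reason to expect success near the Euclidean space is that in $\ell_2^n$ itself one can \emph{always} complete a regular simplex, and the hypothesis $d(X, \ell_2^n) \leq 1 + \tfrac{1}{n}$ keeps $X$ close enough that the completion survives a continuity/degree argument.

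First I would normalize the configuration. Using the operator $T$ realizing the Banach--Mazur distance, I would identify $X$ with $\mathbb{R}^n$ carrying a norm $\|\cdot\|$ comparable to the Euclidean norm $|\cdot|_2$, say $|x|_2 \leq \|x\| \leq (1 + \tfrac{1}{n})|x|_2$ after suitable scaling. The center of mass of $\{x_1, \dots, x_n\}$ together with the Euclidean geometry singles out a candidate direction: in the Euclidean case the completing point lies along the line through the centroid perpendicular to the affine hull of the $x_i$. I would therefore parametrize candidate points $y$ by the Euclidean sphere of the correct radius about the centroid and define a map $F$ that, given a trial point $y$, produces a corrected point by rescaling the vectors $y - x_i$ to have $\|\cdot\|$-length exactly $p$ and reassembling. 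Concretely one studies the vector of discrepancies $\bigl(\|y - x_i\| - p\bigr)_{i=1}^{n}$ and seeks a common zero; the map pushing $y$ in the direction that decreases these discrepancies should be a continuous self-map of a Euclidean ball or sphere.

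The main technical work is to show this correction map is a continuous self-map of an appropriate compact convex set (or sphere) and then invoke Brouwer. The closeness hypothesis enters precisely here: I expect one must show that when $d(X, \ell_2^n) \leq 1 + \tfrac{1}{n}$, the $\|\cdot\|$-distances $\|y - x_i\|$ cannot all be pushed to the same side of $p$, so that the discrepancy vector must vanish somewhere. The quantitative threshold $1 + \tfrac{1}{n}$ surely comes from comparing the spread of the Euclidean distances from a point on the candidate sphere to the $n$ vertices against the worst-case distortion the norm can introduce; the regular simplex in dimension $n$ has an exact "circumradius to inradius" ratio governed by $n$, and the factor $1 + \tfrac{1}{n}$ is what guarantees the norm perturbation stays below this geometric margin. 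I would verify that for $y$ on the boundary the correction vector points strictly inward (or that the Brouwer map has no fixed point escaping the sphere), which is the standard way such degree arguments close.

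The step I expect to be the main obstacle is establishing the boundary/no-escape condition with the \emph{exact} constant $1 + \tfrac{1}{n}$ rather than some weaker implicit bound. This requires a sharp estimate: I would compute, in the Euclidean model, the precise radius at which the completing point sits and the sensitivity of $|y - x_i|_2$ to moving $y$, then show the norm's two-sided distortion by $1 + \tfrac{1}{n}$ is small enough that the signed discrepancies $\|y-x_i\| - p$ cannot be simultaneously controlled to avoid a zero. Getting the continuity of the rescaling map where $y = x_i$ is avoided (one stays on a sphere bounded away from the vertices) is routine, but pinning down that the admissible distortion is governed by this particular harmonic-type quantity $\tfrac{1}{n}$ is where the real estimate lives, and I would spend most of the effort making that inequality tight.
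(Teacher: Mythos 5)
There is no proof to compare against inside this paper: Theorem \ref{brassdekster} is quoted from Brass and Dekster, so your attempt has to stand on its own (and can be measured against the same machinery the paper does use, in Theorems \ref{symm}, \ref{orlicz2}, \ref{subspace2}). Judged that way, what you have written is a plan rather than a proof, and the plan has two genuine gaps. First, a logical one: you claim the conclusion $e(X)\geq n+1$ ``follows immediately once we know any equilateral set of $n$ points can be extended.'' It does not, because nothing guarantees that an equilateral set of $n$ points exists in the first place. The theorem is stated for sets of \emph{at most} $n$ points precisely so that one can bootstrap: start from two points, extend to three, and so on up to $n+1$. Your setup fixes exactly $n$ given points and never addresses the case $k<n$, where the dimension count of your construction changes (fewer constraints than unknowns), so the reduction you lean on is circular.

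Second, the analytic core is missing, and the structure you propose is shakier than the standard one. Your ``correction map'' is never defined: rescaling the vectors $y-x_i$ to $\|\cdot\|$-length $p$ and ``reassembling'' is not a map until you say how the rescaled vectors are combined, and for the most natural choice (push $y$ by a combination $\sum_i(\|y-x_i\|-p)\,u_i$) a Brouwer fixed point only gives that this \emph{linear combination} of discrepancies vanishes, not each discrepancy, unless you also prove the directions $u_i$ stay linearly independent on your domain. Moreover, parametrizing candidates by a Euclidean sphere about the centroid gives an $(n-1)$-dimensional domain against $n$ discrepancies, so the degree argument cannot close without also freeing the radius. Finally, the entire quantitative content --- that the distortion bound $1+\tfrac{1}{n}$ makes the map a self-map of the domain (the ``no-escape'' condition) --- is exactly the part you defer with ``I would spend most of the effort making that inequality tight.'' That estimate \emph{is} the theorem. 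For contrast, the route of Brass, Dekster, and Swanepoel--Villa (the one this paper reuses) avoids both difficulties: the unknowns are pair-indexed slack parameters $\varepsilon_{i,j}\in[0,\beta]^N$, the points $p_j(\varepsilon)$ are built from them, and the map $\varphi_{i,j}(\varepsilon)=1+\varepsilon_{i,j}-\|p_i(\varepsilon)-p_j(\varepsilon)\|$ is a self-map of a cube whose fixed points make every pairwise distance equal to $1$ by construction; the closeness hypothesis is used only to verify the two one-sided bounds that keep $\varphi$ inside the cube. I would rebuild your argument in that form rather than try to repair the single-point correction map.
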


Swanepoel and Villa in \cite{swanepoel2}, applying the method of Brass and Dekster, based on the Brouwer Fixed Point Theorem, have managed to improve their bound. In particular, they found an $\ell_{\infty}$ analogue of Theorem \ref{brassdekster}.
\begin{twr}[Swanepoel, Villa \cite{swanepoel2}]
\label{swanepoelvilla}
Let $X$ be an $n$-dimensional normed space with Banach-Mazur distance $d(X, \ell_\infty^n)\leq \frac{3}{2}$. Then $e(X)\geq n+1$.
\end{twr}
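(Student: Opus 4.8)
The plan is to adapt the Brouwer-fixed-point scheme underlying Theorem \ref{brassdekster}, replacing the Euclidean reference space by $\ell_\infty^n$. First I would normalize the situation: since the Banach--Mazur distance is attained in finite dimensions and $d(X,\ell_\infty^n)\le\tfrac32$, there is a linear isomorphism identifying $X$ with $(\mathbb{R}^n,\|\cdot\|)$ for a norm satisfying
\[
\|x\|_\infty \le \|x\| \le \tfrac32\,\|x\|_\infty \qquad (x\in\mathbb{R}^n).
\]
The structural input is that $\ell_\infty^n$ carries the explicit equilateral set $q_0=0,\ q_1=e_1,\dots,q_n=e_n$, all of whose pairwise distances equal $1$. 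The goal is then to perturb this configuration into one that is equilateral for $\|\cdot\|$.

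Next I would set up the fixed-point map. Fix $p_0=0$ and search for the remaining points in the form $p_i=e_i+u_i$, where each perturbation $u_i$ ranges over a compact convex box $D_i$ in a suitable subspace, of total dimension matching the number of independent equalization constraints; put $D=\prod_{i=1}^n D_i$, a compact convex set. On $D$ I would define a continuous self-map $\Phi=(\Phi_1,\dots,\Phi_n)$ that nudges each $u_i$ in the direction reducing the discrepancies between the distances $\|p_i-p_j\|$ (including $j=0$) and a common value, for instance the running average of the current distances. The map is to be arranged so that $\Phi(u)=u$ forces all pairwise distances to coincide, i.e.\ so that the fixed points of $\Phi$ are exactly the equilateral configurations lying in $D$. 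Continuity of $\Phi$ is immediate from continuity of the norm, so Brouwer's theorem yields a fixed point, hence an equilateral set of $n+1$ points and the bound $e(X)\ge n+1$.

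The main obstacle is to verify the two properties that make this work, and this is exactly where the hypothesis $d(X,\ell_\infty^n)\le\tfrac32$ is consumed. The first is the self-map property $\Phi(D)\subseteq D$: one must show that when the perturbations are confined to $D$, each distance $\|p_i-p_j\|$ stays close enough to $1$ that the correction keeps $u_i$ inside $D_i$. Using the sandwich $\|\cdot\|_\infty\le\|\cdot\|\le\tfrac32\|\cdot\|_\infty$ one estimates how far the $\|\cdot\|$-distances of the perturbed points can drift from their $\ell_\infty$-values, and the constant $\tfrac32$ is precisely what is needed for these estimates to close up; this is the quantitative heart of the argument, where the flatness of the cube faces is exploited and where the uniform-in-$n$ constant (in contrast with the $1+\tfrac1n$ of Theorem \ref{brassdekster}) originates. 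The second point is a nondegeneracy check guaranteeing that the correction vanishes only at genuinely equilateral configurations, so that a fixed point is not a spurious one sitting on $\partial D$. Once both are established the theorem follows.
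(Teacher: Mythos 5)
Your high-level frame --- normalize so that $\|x\|_\infty \le \|x\| \le \tfrac32\|x\|_\infty$, perturb the standard configuration $0,e_1,\dots,e_n$, and invoke Brouwer --- is indeed the Swanepoel--Villa method (which this paper does not reprove for this cited theorem, but reuses almost verbatim in the proof of Theorem \ref{subspace2}). However, your proposal defers exactly the step that constitutes the proof, and the design you sketch would not close up. The actual argument does not attach a perturbation $u_i$ to each point and nudge toward a running average; it attaches one scalar parameter $\varepsilon_{i,j}\in[0,1]$ to each \emph{pair} of points and builds the configuration in a staircase pattern, e.g.
$p_j(\varepsilon)=(\varepsilon_{1,j},\dots,\varepsilon_{j-1,j},-1,0,\dots,0)$ for $1\le j\le n$ and $p_{n+1}(\varepsilon)=(\varepsilon_{1,n+1},\dots,\varepsilon_{n,n+1})$, so that the $\ell_\infty$-distance of each pair equals \emph{exactly} $1+\varepsilon_{i,j}$: every pairwise distance is controlled, linearly and monotonically, by its own parameter and nothing else. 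One then defines $\varphi_{i,j}(\varepsilon)=1+\varepsilon_{i,j}-\|p_i(\varepsilon)-p_j(\varepsilon)\|$ on $[0,1]^N$ with $N=\frac{n(n+1)}{2}$, and the self-map property follows directly from the sandwich: $\varphi_{i,j}\ge 1+\varepsilon_{i,j}-\|p_i-p_j\|_\infty=0$ and $\varphi_{i,j}\le 1+\varepsilon_{i,j}-\tfrac23\|p_i-p_j\|_\infty=\tfrac13(1+\varepsilon_{i,j})\le 1$. No projection or clamping is ever introduced, so any Brouwer fixed point --- boundary included --- satisfies $\|p_i-p_j\|=1$ for all pairs, verbatim from the fixed-point equation; this yields $n+1$ equidistant points at once.

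This construction is precisely what defuses the two ``obstacles'' you name, and in your formulation they remain genuine gaps rather than routine verifications. First, with per-point perturbations $u_i\in D_i$ and a correction toward a running average, keeping $\Phi$ a self-map of $D$ essentially forces a retraction back into $D$; then a fixed point on $\partial D$ need not have vanishing correction, and your ``nondegeneracy check'' is not a check but an unproved (and, for such retracted maps, generally false) claim --- Brouwer gives no control over where the fixed point lands. Second, you never specify how the perturbation directions are apportioned among the $\frac{n(n+1)}{2}$ equalization constraints; without the one-parameter-per-pair staircase structure, a single coordinate of some $u_i$ influences many distances simultaneously, and the clean quantitative estimate showing that $d(X,\ell_\infty^n)\le\tfrac32$ suffices (in fact $\le 2$ suffices, as Averkov observed, and as the factor-$2$ computation in the proof of Theorem \ref{subspace2} shows) is no longer available. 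In short: the scaffolding is right, but the load-bearing element --- a parametrized configuration whose reference distances are exactly $1+\varepsilon_{i,j}$ together with a map whose fixed points are unconditionally equilateral --- is missing from your argument.
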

It turns out that the constant $\frac{3}{2}$ can be easily improved to $2$ as observed by Averkov in \cite{averkov}, who generalized this result for embeddings of more general finite metric spaces. In other words, Conjecture \ref{dimension} is true in every $n$-dimensional space with the Banach-Mazur distance to $\ell_{\infty}^n$ not greater than $2$. Swanepoel and Villa have pursued their method even further, proving that $n$-dimensional spaces which are sufficiently close to some of the $\ell_{p}^{n}$ spaces ``almost'' satisfy Conjecture \ref{dimension}. Specifically, we have the following

\begin{twr}[Swanepoel, Villa \cite{swanepoel2}]
\label{swanepoelvilla2} For each $n>2$ and $p \in (1,+\infty)$ there exists $R(p,n)>1$ such that for any $n$-dimensional normed space $X$ with Banach-Mazur distance $d(X,\ell_p^n)\le R(p,n)$ we have $e(X)\geq n$. Moreover,
$$R(p,n) = \max_{\theta>0} \left(\frac{1+(1+\theta)^p}{2+(n-2)\theta^p}\right)^{1/p} \sim 1+\frac{p-1}{2p}n^{-\frac{1}{p-1}}\text{ as $n\to\infty$ with $p$ fixed.}$$
\end{twr}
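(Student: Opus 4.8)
The plan is to follow the Brouwer-fixed-point method of Brass and Dekster, deforming a known equilateral set of $\ell_p^n$ into one of $X$. First I would normalize the Banach--Mazur distance: from $d(X,\ell_p^n)\le R:=R(p,n)$ one obtains a linear isomorphism which, after rescaling, lets us identify $X$ with $(\mathbb{R}^n,\|\cdot\|)$ where
$$\|v\|_p \le \|v\| \le R\,\|v\|_p \qquad \text{for all } v\in\mathbb{R}^n,$$
with $\|\cdot\|_p$ the standard $\ell_p^n$-norm. The reference object is a one-parameter family of $n$-point configurations that are equilateral in $\ell_p^n$: starting from $e_1,\dots,e_{n-1}$ (pairwise $\ell_p$-distance $2^{1/p}$) I adjoin a point $x_n(\theta)=-\theta(e_1+\cdots+e_{n-1})+b(\theta)e_n$, where $b(\theta)$ is fixed by the equidistance requirement $(1+\theta)^p+(n-2)\theta^p+b(\theta)^p=2$; for small $\theta\ge 0$ this gives a genuine equilateral $n$-set of $\ell_p^n$, with $\theta=0$ recovering $\{e_1,\dots,e_n\}$. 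The goal is to show that a nearby configuration remains equilateralizable in the deformed norm $\|\cdot\|$, thereby producing $n$ equilateral points and the bound $e(X)\ge n$.

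To locate the deformed configuration I would run a continuity/Brouwer argument on the space of small perturbations $x_i = \widehat{x}_i + y_i$ of the chosen reference points $\widehat{x}_1,\dots,\widehat{x}_n$. Writing $r_{ij}=\|x_i-x_j\|$, the equilateral condition is that all $\binom{n}{2}$ numbers $r_{ij}$ coincide. One defines a continuous self-map $F$ of a small compact convex domain of perturbations $(y_1,\dots,y_n)$ that moves each point so as to pull every pair whose current distance exceeds the running average back toward that average (and push the short ones out), arranged so that a fixed point of $F$ is exactly a configuration in which all pairwise distances agree; Brouwer's theorem supplies such a fixed point. At $\|\cdot\|=\|\cdot\|_p$ the undeformed reference set is already equilateral, so the map has the trivial solution there, and the content is that the solution persists and stays nondegenerate as the norm is distorted. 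Here I expect the \emph{main obstacle}: one must choose the domain and the map $F$ so that $F$ maps the domain into itself for every admissible norm and so that the fixed point is an honest equilateral set (distinct points, positive common distance). This amounts to a quantitative estimate on how far $F$ can displace a point relative to the spread of the distances $r_{ij}$, which in turn requires controlling how much a single difference vector's norm may change relative to the others.

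That quantitative control is precisely what fixes $R(p,n)$, and it depends on the reference parameter $\theta$. For a fixed $\theta$ the obstruction to balancing is dictated by the two difference directions of the configuration with the most disparate $\ell_p$-lengths: the difference $x_n(\theta)-e_i$, whose coordinates contribute a term $(1+\theta)^p$ together with the spread $(n-2)\theta^p$, and a short edge $e_i-e_j$ spread by $\theta$ over the remaining coordinates, of $\ell_p^p$-length $2+(n-2)\theta^p$. Since the deformed norm distorts $\ell_p$-lengths by at most the factor $R$, the analysis shows that the Brouwer fixed point remains a valid equilateral set exactly while
$$R \le \left(\frac{1+(1+\theta)^p}{2+(n-2)\theta^p}\right)^{1/p}.$$
Because we are free to select the reference configuration, we optimize the tolerance over $\theta>0$, which yields the stated $R(p,n)=\max_{\theta>0}\bigl(\tfrac{1+(1+\theta)^p}{2+(n-2)\theta^p}\bigr)^{1/p}$; the appearance of $\max$ (rather than $\min$) reflects that $\theta$ is a free design parameter. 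The concluding step is routine asymptotics: for fixed $p$ the maximizer $\theta$ scales like $n^{-1/(p-1)}$, and a Taylor expansion of the maximand about this value gives $R(p,n)\sim 1+\frac{p-1}{2p}\,n^{-1/(p-1)}$ as $n\to\infty$.
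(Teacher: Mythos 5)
Your overall frame --- normalize the two norms to within a factor $R$ of each other, run a Brouwer fixed-point argument on a perturbed configuration, optimize a free parameter $\theta$, then do asymptotics --- is the right one, but the two claims that constitute the theorem are never actually proved, and the map you sketch would not prove them. A fixed point of your map $F$ (move each point so as to pull overlong pairs toward the running average and push short pairs apart) is only a balance-of-forces condition: at a fixed point the \emph{net} displacement of each point vanishes, and this does not force the individual pairwise distances to agree. There is also a mismatch between your $n\cdot n$ perturbation variables and the $n(n-1)/2$ equations to be solved, and truncating $F$ to keep it inside a compact domain can create spurious boundary fixed points. The construction that works (Swanepoel--Villa's, reused in this paper in the proofs of Theorems \ref{symm}, \ref{orlicz2} and \ref{subspace2}) sidesteps all of this by allocating exactly one scalar variable per pair and wiring it into the coordinates: with $N=n(n-1)/2$ and $\varepsilon=(\varepsilon_{i,j})\in[0,\beta]^N$, one sets $p_1(\varepsilon)=(-\gamma,0,\ldots,0)$, $p_j(\varepsilon)=(\varepsilon_{1,j},\ldots,\varepsilon_{j-1,j},-\gamma,0,\ldots,0)$ for $2\le j\le n-1$, $p_n(\varepsilon)=(\varepsilon_{1,n},\ldots,\varepsilon_{n-1,n},-\gamma)$, and $\varphi_{i,j}(\varepsilon)=1+\varepsilon_{i,j}-\|p_i(\varepsilon)-p_j(\varepsilon)\|_X$; a fixed point of $\varphi$ says literally that $\|p_i-p_j\|_X=1$ for all pairs, so equilaterality and distinctness are immediate, no averaging needed. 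Note also that the unperturbed reference set here is the scaled basis $-\gamma e_1,\ldots,-\gamma e_n$, not an exact equilateral $n$-set with an adjoined point $x_n(\theta)$ as in your sketch; $\theta$ turns out to be the ratio $\beta/\gamma$ of the perturbation cap to the scale of the simplex.

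The quantitative step, which you defer with ``the analysis shows,'' is exactly where the formula comes from, and your heuristic for it is inconsistent with your own configuration: in your family all pairwise $\ell_p$-distances equal $2^{1/p}$ by construction, so there are no ``most disparate $\ell_p$-lengths'' to compare. In the real argument one normalizes $\|v\|_X\le\|v\|_p\le R\|v\|_X$ and uses coordinatewise monotonicity of $\|\cdot\|_p$: every coordinate of $p_i-p_j$ other than the $i$-th (equal to $\gamma+\varepsilon_{i,j}$) and the $j$-th (equal to $\gamma$) has absolute value at most $\beta$, whence
$$\|(\gamma+\varepsilon_{i,j},\gamma,0,\ldots,0)\|_p\;\le\;\|p_i-p_j\|_p\;\le\;\|(\gamma+\varepsilon_{i,j},\gamma,\beta,\ldots,\beta)\|_p.$$
Consequently $\varphi_{i,j}\le\beta$ holds provided $\|(\gamma+\beta,\gamma,0,\ldots,0)\|_p\ge R$ (worst case $\varepsilon_{i,j}=\beta$), and $\varphi_{i,j}\ge0$ holds provided $\|(\gamma,\gamma,\beta,\ldots,\beta)\|_p\le1$ (worst case $\varepsilon_{i,j}=0$ with all other variables at $\beta$); one also needs the small monotonicity check in $\varepsilon_{i,j}$ that legitimizes evaluating only at these worst cases. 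Writing $\beta=\theta\gamma$ and taking $\gamma^p=1/\bigl(2+(n-2)\theta^p\bigr)$, the two conditions collapse to $R^p\le\bigl(1+(1+\theta)^p\bigr)/\bigl(2+(n-2)\theta^p\bigr)$, and maximizing over the design parameter $\theta$ gives the stated $R(p,n)$; your closing asymptotics are then fine. So the numerator and denominator are the extreme values of $\|p_i-p_j\|_p^p/\gamma^p$ permitted by the perturbation scheme, not lengths occurring in an exact equilateral family, and without the self-map verification above your proposal contains no proof of either substantive claim: that $\varphi$ maps the cube into itself, and that a fixed point is an equilateral set.
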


Even if Conjecture \ref{dimension} is believed to be true, spaces which satisfy the conditions of theorem of Brass-Dekster or Swanepoel-Villa, are the only examples existing in the literature for which we know that Conjecture \ref{dimension} holds. Our main goal is to provide some evidence for Conjecture \ref{dimension} by proving it in some other broad classes of normed spaces, or at least give some good lower bound on the equilateral dimension.

We start with a class of normed spaces defined by a geometric property. We say that an $n$-dimensional normed space $X=(\mathbb{R}^n, || \cdot ||)$ is a \emph{permutation-invariant} space if for every permutation $\sigma: \{1, 2, \ldots, n \} \to \{1, 2, \ldots, n \}$ we have 
$$||(x_1, x_2, \ldots, x_n)|| = ||(x_{\sigma(1)}, x_{\sigma(2)}, \ldots, x_{\sigma(n)})||.$$
In other words, the linear mapping $\mathbb{R}^{n} \ni (x_1, x_2, \ldots, x_n) \to (x_{\sigma(1)}, x_{\sigma(2)}, \ldots, x_{\sigma(n)})$ is an isometry of $X$. Or geometrically, the unit ball of $X$ is symmetric with respect to every hyperplane of the form 
$$\{(x_1, x_2, \ldots, x_n) \in \mathbb{R}^n: x_i=x_{i+1} \}.$$
For the permutation-invariant spaces we have
\begin{twr}
\label{perm}
Let $X$ be an $n$-dimensional permutation-invariant normed space. Then $e(X) \geq n+1$.
\end{twr}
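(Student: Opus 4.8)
The plan is to exploit the large symmetry group of $X$ to reduce the construction of an $(n+1)$-point equilateral set to a one-dimensional continuity argument. First I would observe that the standard basis $e_1, \dots, e_n$ is already equilateral: for $i \neq j$ the vector $e_i - e_j$ is a coordinate permutation of $e_1 - e_2$, so permutation invariance gives $\|e_i - e_j\| = \|e_1 - e_2\| =: p$. It therefore suffices to adjoin a single point at distance $p$ from each of the $e_i$.

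The natural candidate is a point on the diagonal, $z = t\,\mathbf{1}$ with $\mathbf{1} = (1, \dots, 1)$, since $\mathbf{1}$ is fixed by every coordinate permutation. For such $z$ the difference $z - e_i = t\mathbf{1} - e_i$ is, for each $i$, a coordinate permutation of $t\mathbf{1} - e_1 = (t-1, t, \dots, t)$, so by permutation invariance the quantity $f(t) := \|t\mathbf{1} - e_i\|$ does not depend on $i$. Thus $z = t\mathbf{1}$ is at distance exactly $p$ from all the $e_i$ simultaneously as soon as $f(t) = p$ for a single $t$, and the whole problem collapses to solving this one scalar equation.

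Next I would solve $f(t) = p$ by the intermediate value theorem. The function $f$ is continuous (a norm composed with an affine map) and satisfies $f(t) \to \infty$ as $|t| \to \infty$, so it attains every value at least as large as its minimum. The only real content is to check that $p$ lies above this minimum, i.e.\ that the distance from $e_1$ to the diagonal line $\mathbb{R}\mathbf{1}$ does not exceed $p$. I expect this to be the crux, and I would handle it by an averaging estimate: each $e_1 - e_i$ (for $i = 2, \dots, n$) has norm $p$, and their average is
\[
\frac{1}{n-1} \sum_{i=2}^n (e_1 - e_i) = \frac{1}{n-1}\left(n e_1 - \mathbf{1}\right) = \frac{n}{n-1}\left(e_1 - \tfrac{1}{n}\mathbf{1}\right).
\]
By the triangle inequality its norm is at most $p$, whence $f\!\left(\tfrac1n\right) = \|e_1 - \tfrac1n \mathbf{1}\| \leq \tfrac{n-1}{n}\,p < p$. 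Combined with $f(t) \to \infty$, this forces some $t_0$ with $f(t_0) = p$.

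Finally I would assemble the conclusion: the points $e_1, \dots, e_n, z$ with $z = t_0 \mathbf{1}$ are pairwise at distance $p > 0$, hence distinct, so they form an equilateral set of cardinality $n+1$ and $e(X) \geq n+1$. The main obstacle is really just the diagonal-distance bound, which the averaging trick resolves cleanly; everything else is forced by the permutation symmetry, and no appeal to Brouwer's theorem is needed in this highly symmetric case.
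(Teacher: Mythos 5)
Your proposal is correct and follows essentially the same route as the paper: the standard basis as the initial equilateral set, a point $t\mathbf{1}$ on the diagonal, and the intermediate value theorem, with the key bound $f\left(\tfrac{1}{n}\right) \leq \tfrac{n-1}{n}p$ obtained exactly as in the paper by writing $e_1 - \tfrac{1}{n}\mathbf{1}$ as a convex-type combination of the vectors $e_1 - e_i$ and applying the triangle inequality. Your phrasing of that step as an averaging of the $e_1 - e_i$ is just a cosmetic repackaging of the paper's decomposition, so nothing differs in substance.
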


An $n$-dimensional normed space $X=(\mathbb{R}^n, || \cdot ||)$ is called \emph{$1$-unconditional} (or \emph{absolute} following \cite{matrix}) if for every choice of signs $(\varepsilon_1, \varepsilon_2, \ldots, \varepsilon_n) \in \{-1, 1\}^n$ we have 
$$||(x_1, x_2, \ldots, x_n)|| = ||(\varepsilon_1 x_1, \varepsilon_2 x_2, \ldots, \varepsilon_n x_n)||.$$
Similarly like before, it is equivalent to the fact that the linear mapping $\mathbb{R}^{n} \ni (x_1, x_2, \ldots, x_n) \to (\varepsilon_1 x_1, \varepsilon_2 x_2, \ldots, \varepsilon_n x_n)$ is an isometry of $X$, or geometrically, the unit ball of $X$ is symmetric with respect to every hyperplane of the form 
$$\{(x_1, x_2, \ldots, x_n) \in \mathbb{R}^n: x_i=0 \}.$$

An $n$-dimensional normed space $X$ is called \emph{symmetric} if it is both permutation-invariant and $1$-unconditional. Moreover, $X$ is called \emph{smooth} if the unit ball has exactly one supporting functional at each point of the unit sphere (recall that if $||x_0||=1$ then $f$ is supporting functional at $x_0$ if $f(x_0)=1$ and $||x|| \leq 1 \Rightarrow |f(x)| \leq 1$). It turns out, that for symmetric spaces which are additionally smooth, we can find a non-trivial neighbourhood (in the sense of Banach-Mazur distance) in which every space has equilateral dimension not less than $n$. In other words, we provide the following generalization of Theorem \ref{swanepoelvilla2}.

\begin{twr}
\label{symm}
Let $X$ be a smooth and symmetric $n$-dimensional normed space. Then, there exists $R(X)>1$ such that $e(Y) \geq n$ for every normed space $Y$ satisfying $d(X, Y) \leq R(X)$. Moreover, $R(X) \geq 1 + \frac{\varepsilon_0}{6n}$, where $\varepsilon_0>0$ satisfies $\frac{\rho_X(\varepsilon_0)}{\varepsilon_0} \leq \frac{1}{6n}$ and $\rho_X$ is the modulus of smoothness of $X$.
\end{twr}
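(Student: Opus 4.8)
The plan is to start from the canonical equilateral set inside $X$ and transport it to $Y$ by a Brouwer fixed-point argument, using smoothness to linearize the norm and the symmetry to pin down a perfectly equilateral base configuration. Since $X$ is permutation-invariant, the standard basis $\{e_1,\ldots,e_n\}$ is already equilateral: all differences $e_i-e_j$ are permutations of one another, so $\|e_i-e_j\|_X=\alpha$ for a fixed $\alpha>0$ and all $i\neq j$. Fix a linear isomorphism $T:X\to Y$ with $\|T\|\,\|T^{-1}\|\le R:=R(X)$; pulling the norm of $Y$ back through $T$ and rescaling, we may assume we work on $\mathbb{R}^n$ with a single norm $\|\cdot\|$ satisfying $\|x\|_X\le\|x\|\le R\|x\|_X$. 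In this norm the points $e_i$ are no longer equilateral, but $\alpha\le\|e_i-e_j\|\le R\alpha$, so each pairwise distance deviates from $\alpha$ by at most a factor $R-1$. The goal becomes to perturb the $e_i$ slightly to $n$ genuinely equilateral points of $\|\cdot\|$.

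The heart of the argument is a fixed-point scheme on a compact convex set of admissible perturbations $z_i=e_i+\delta_i$, with each $\|\delta_i\|$ bounded by $\varepsilon_0$. I would define a continuous self-map $F$ of this set that nudges each vertex so as to reduce the spread of the pairwise distances, arranged so that its fixed points are exactly the equilateral configurations. The role of smoothness is to linearize: since $X$ is smooth, each difference has a unique supporting functional, and the modulus of smoothness furnishes the two-sided estimate
$$\bigl|\,\|u+w\|-\|u\|-f_u(w)\,\bigr|\le 2\,\|u\|\,\rho_X\!\left(\frac{\|w\|}{\|u\|}\right)$$
for the supporting functional $f_u$ at $u$. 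This says that along the perturbation the distances depend, up to an error governed by $\rho_X$, linearly on the $\delta_i$, while the permutation symmetry of $X$ forces the base configuration to be exactly equilateral and makes the first-order response of the distances uniform across pairs, so that the equalizing map $F$ has the required monotone, contraction-like behaviour near the center.

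The main obstacle is verifying that $F$ maps the admissible set into itself with the stated constant, i.e. that perturbations of size at most $\varepsilon_0$ suffice to absorb distance discrepancies of relative size $R-1$. Quantitatively, one unit of perturbation changes a distance by a definite first-order amount (bounded below thanks to smoothness and symmetry), whereas the nonlinear error is of order $\alpha\,\rho_X(\varepsilon_0)$. Balancing the correctable discrepancy $\sim(R-1)\alpha$ against the usable first-order budget $\sim\varepsilon_0$, with the smoothness error spread over the $n$ vertices, is precisely what produces the two conditions $\rho_X(\varepsilon_0)/\varepsilon_0\le\tfrac{1}{6n}$ and $R\le 1+\tfrac{\varepsilon_0}{6n}$; the hypothesis that $X$ be smooth is exactly what guarantees such an $\varepsilon_0>0$ exists, since then $\rho_X(\tau)/\tau\to 0$. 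Once the self-map property holds, Brouwer's theorem yields a fixed point, hence $n$ equilateral points in $Y$ and $e(Y)\ge n$. That we obtain only $n$ rather than $n+1$ reflects the fact that the symmetric configuration being perturbed has exactly $n$ points, so the linearization does not by itself produce an additional vertex.
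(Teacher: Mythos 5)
Your overall strategy is the right one---it is the Brass--Dekster/Swanepoel--Villa fixed-point scheme that the paper itself follows, and your two-sided modulus-of-smoothness estimate via supporting functionals is correct---but there is a genuine gap: the map $F$, which is the entire content of the proof, is never constructed. The paper's construction is not a map that ``nudges each vertex so as to reduce the spread of the pairwise distances''; it uses the specific Swanepoel--Villa parametrization with one parameter $\varepsilon_{i,j}\in[0,\beta]$ \emph{per pair}, inserted as the $i$-th coordinate of the point $p_j(\varepsilon)=(\varepsilon_{1,j},\ldots,\varepsilon_{j-1,j},-\gamma,0,\ldots,0)$, together with $\varphi_{i,j}(\varepsilon)=1+\varepsilon_{i,j}-\|p_i(\varepsilon)-p_j(\varepsilon)\|_Y$. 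With this choice a Brouwer fixed point \emph{literally reads} $\|p_i-p_j\|_Y=1$ for all pairs. A generic ``equalizing'' map has no such property: a fixed point of a spread-reducing map could be a configuration where the nudges merely balance out without the spread being zero, and nothing in your sketch rules this out. (Also, Brouwer needs only the self-map property, not the ``contraction-like behaviour'' you invoke; if you actually had a contraction you would not need Brouwer.)

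The second missing ingredient is how the self-map property is verified. The difficulty is not made to vanish by permutation symmetry making ``the first-order response uniform across pairs'': the problem is the cross-terms, i.e.\ how the parameters $\varepsilon_{k,l}$ with $\{k,l\}\neq\{i,j\}$ affect $\|p_i-p_j\|$. The paper controls these using $1$-unconditionality, which your sketch never invokes: by Lemma \ref{rownowaznosc} the norm is monotone, giving the sandwich $\|(\gamma+\varepsilon_{i,j},\gamma,0,\ldots,0)\|\le\|p_i-p_j\|\le\|(\gamma+\varepsilon_{i,j},\gamma,\beta,\ldots,\beta)\|$, which reduces the self-map condition to two scalar inequalities in $(\beta,\gamma)$, namely $\|(\gamma+\beta,\gamma,0,\ldots,0)\|\ge 1+\frac{\varepsilon_0}{6n}$ and $\|(\gamma,\gamma,\beta,\ldots,\beta)\|\le 1$. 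These are then verified with the concrete choice $\gamma=c-\varepsilon$, $\beta=3\varepsilon$, $\varepsilon=\frac{\varepsilon_0}{3n}$, where $\|(c,c,0,\ldots,0)\|=1$, and the key quantitative input is Lemma \ref{supporting}: smoothness plus symmetry force the supporting functional at $(c,c,0,\ldots,0)$ to be exactly $(\frac{1}{2c},\frac{1}{2c},0,\ldots,0)$, and evaluating it on the perturbed vectors is what produces both the bound $\|(\gamma,\gamma,\beta,\ldots,\beta)\|\le 1$ and the final estimate $R\ge 1+\frac{\varepsilon_0}{6n}$. Your ``balancing the correctable discrepancy against the first-order budget'' gestures at this, but the constants cannot be recovered from a generic first-order count; they come from these specific evaluations, and without the unconditionality-based sandwich the argument does not close.
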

We shall provide the definition of the modulus of smoothness before the proof of Theorem \ref{symm}.

In the next section we consider finite dimensional Musielak-Orlicz spaces. A convex, left-continuous function $f : [0,\infty) \to [0, \infty]$, satisfying $f(0) = 0$, $\lim_{x \to \infty} f(x) = \infty $ and $f(x_0) \neq \infty$ for some $x_0>0$, is called the \emph{Young} or \emph{coordinate} function (it should be noted that in the literature their exist variations of this definition). For any collection of Young functions $f_1, f_2, \ldots, f_n$ the set
$$K = \{ (x_1, x_2, \ldots, x_n) \in \mathbb{R}^n : \sum_{i=1}^n f_i(|x_i|) \leq 1 \},$$
is easily proven to be convex, symmetric, bounded and with non-empty interior. In consequence, 
$$||x|| = \inf \{ \lambda : x \in \lambda K\}$$
defines a norm on $\mathbb{R}^n$ called the \emph{Luxemburg norm}. The space $(\mathbb{R}^n, || \cdot ||)$ is called an \emph{Musielak-Orlicz space}. Musielak-Orlicz space is called simply an \emph{Orlicz space} if $f_1=f_2=\ldots = f_n$ holds. Examples of Orlicz spaces include the $\ell^n_p$ spaces for any $1 \leq p < \infty$ with $f(t) = t^p$ being the Young function and also $\ell^n_{\infty}$ space with $f$ defined as $f(t) \equiv 0$ for $t \in [0,1]$ and $f(t) \equiv \infty$ for $t>1$. It turns out that Musielak-Orlicz spaces also satisfy Conjecture \ref{dimension}.

\begin{twr}
\label{orlicz1}
Let $X$ be an $n$-dimensional Musielak-Orlicz space. Then $e(X) \geq n+1$.
\end{twr}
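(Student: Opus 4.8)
The plan is to prove $e(X) \geq n+1$ for an $n$-dimensional Musielak-Orlicz space $X$ by exhibiting an explicit equilateral set of $n+1$ points, exploiting the coordinate-wise separable structure of the norm. The natural candidate set is built from the standard coordinate directions: I would look for $n+1$ points of the form $v_0 = 0$ (or some symmetric center) together with points $v_i$ supported on few coordinates, chosen so that all pairwise distances equal a common value $p$. Because the Luxemburg norm is defined through the single constraint $\sum_i f_i(|x_i|) \leq 1$, the key advantage over a general norm is that distances can be computed and controlled coordinate by coordinate: if two points differ only in a small number of coordinates, their distance is governed by the corresponding $f_i$'s alone.

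\textbf{Construction.} First I would try the simplest symmetric configuration: take $v_i = t \, e_i$ for $i = 1, \dots, n$ (scalar multiples of the standard basis vectors) together with one additional point $w$, and attempt to solve for scalars so that $\|v_i - v_j\|$ is constant for $i \neq j$ and $\|v_i - w\|$ matches. For the differences $v_i - v_j = t(e_i - e_j)$, the norm is the value $\lambda$ solving $f_i(t/\lambda) + f_j(t/\lambda) = 1$, so I would need these pairwise two-coordinate equations to be compatible across all pairs $i,j$. In the pure Orlicz case ($f_1 = \dots = f_n = f$) this is immediate by symmetry, but for genuinely different $f_i$ the two-coordinate distances need not all coincide, so a rigid "simplex of basis vectors" will generally fail. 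This is where I expect the main obstacle to lie.

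\textbf{Continuity argument.} To overcome this, rather than forcing an explicit closed form, I would follow the continuity/Brouwer strategy signalled in the abstract and used throughout the paper. The idea is to parametrize a family of $n+1$ candidate points by a vector of free parameters (for instance, let the $i$-th point have a single nonzero coordinate $x_i$ in position $i$, plus a common shared point), and define a continuous map that records the discrepancies among the $\binom{n+1}{2}$ pairwise distances. Using the strict monotonicity and continuity of each $f_i$ on the region where it is finite, the coordinate-separability lets me invert the distance equations coordinate-wise and set up a fixed-point problem: I would show that the map sending a proposed common distance $p$ (and auxiliary coordinates) to the induced actual distances has a fixed point, via the Brouwer Fixed Point Theorem, on a suitably chosen compact convex domain. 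The separable form of $\sum_i f_i$ is exactly what guarantees that each defining equation $f_i(\cdot) = \text{const}$ can be solved monotonically, yielding the continuity and the needed boundary (sign/degree) conditions.

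\textbf{Main obstacle.} The hard part will be handling the possibly degenerate behaviour of the $f_i$: a Young function may be $+\infty$ beyond some threshold (as in the $\ell^n_\infty$ example), may have flat pieces where it is not strictly increasing, and is only assumed left-continuous rather than continuous. These features can cause the distance-defining equations to fail to have unique or interior solutions, so I would need to treat the effective domain of each $f_i$ carefully, perhaps passing to the generalized inverse $f_i^{-1}(s) = \sup\{t : f_i(t) \leq s\}$ and verifying that the continuity and boundary conditions required for Brouwer survive at the flat or infinite portions. Once the map is shown to be continuous and to map a compact convex set into itself with the correct behaviour on the boundary, the fixed point produces the desired $n+1$ equidistant points, establishing $e(X) \geq n+1$.
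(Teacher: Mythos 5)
There is a genuine gap: your proposal is a plan rather than a proof, and the plan stalls precisely where the paper's argument is concrete. First, the obstacle you flag for the basis-vector configuration — that for genuinely different $f_i$ the pairwise two-coordinate distances need not coincide — has a one-line resolution you missed: do not use a common scalar $t$, but scale each coordinate separately, choosing $c_i>0$ with $f_i(c_i)=\tfrac12$; then for every pair $i\neq j$ one has $f_i(c_i)+f_j(c_j)=1$, so the points $c_ie_i$ are automatically pairwise equidistant at distance $1$. Second, the fixed-point machinery you invoke to finish is never constructed: you specify no domain, no map, and no verification that the map sends the domain into itself, which is exactly the content that would have to be supplied. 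In fact no Brouwer-type argument is needed here. The paper completes the proof with a one-variable intermediate value theorem: setting $g_i(x)=f_i(c_i-x)-f_i(x)$ on $[0,c_i]$ (strictly decreasing, continuous, with image $[-\tfrac12,\tfrac12]$, hence invertible), one takes $t_1\in[0,c_1]$ free and $t_i=g_i^{-1}(g_1(t_1))$, which forces the point $t=(t_1,\dots,t_n)$ to be equidistant from all the $c_ie_i$ by construction; the common distance is controlled by $h(t_1)=g_1(t_1)+\sum_j f_j(t_j)$, which moves continuously from $h(0)=\tfrac12<1$ to $h(c_1)=\tfrac{n-1}{2}\geq 1$, so it equals $1$ somewhere.

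The degenerate behaviour of Young functions (flat pieces, jumps to $+\infty$, mere left-continuity) is also handled differently, and your treatment of it is the second gap. You propose to push generalized inverses $f_i^{-1}(s)=\sup\{t: f_i(t)\le s\}$ through the fixed-point argument, but flat pieces destroy the strict monotonicity and infinite values destroy the continuity that your own argument requires, and you do not verify either. The paper instead proves a separate reduction lemma: every Musielak-Orlicz norm is a pointwise limit of Musielak-Orlicz norms with strictly increasing, finite-valued coordinate functions (replace $f_i$ by $\tfrac{k-1}{k}f_i+\tfrac{\cdot}{k}$, suitably extended past the finiteness threshold), and a limit of $(n+1)$-point equilateral sets, extracted by compactness, is again equilateral. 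Your instinct to exploit separability and continuity is the right one, but without the per-coordinate scaling trick, the explicit IVT construction (or a genuinely worked-out substitute), and a rigorous reduction to the non-degenerate case, the proposal does not yet constitute a proof.
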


An additional assumption on the coordinate functions of an Musielak-Orlicz space allows us to give yet another generalization of Theorem \ref{swanepoelvilla2}.

\begin{twr}
\label{orlicz2}
Let $X$ be an $n$-dimensional Musielak-Orlicz space whose coordinate functions $f_1, f_2, \ldots, f_n$ satisfy the condition $f_i'(0)=0$ for $i=1, 2, \ldots, n$. Then, there exist $R(X)>1$ such that $e(Y) \geq n$ for every normed space $Y$ such that $d(X, Y) \leq R(X)$.
\end{twr}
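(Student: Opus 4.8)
The plan is to produce, inside $X$ itself, an explicit equilateral set of $n$ points supported on the coordinate axes, to observe that the hypothesis $f_i'(0)=0$ forces $X$ to be smooth at each of these points, and then to perturb this configuration into an equilateral $n$-set of any sufficiently close $Y$ by a Brouwer fixed point argument, exactly in the spirit of Theorem \ref{swanepoelvilla2}.

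First I would normalize the comparison. If $d(X,Y)\le R$, choose a linear isomorphism $T$ realizing the distance and scaled so that $\|x\|_X\le\|Tx\|_Y\le R\|x\|_X$ for all $x$; since $e(\cdot)$ is a linear-isometry invariant we may replace $Y$ by $(\mathbb{R}^n,\|\cdot\|)$ with $\|x\|:=\|Tx\|_Y$, so that $X$ and $Y$ share the underlying space and $\|x\|_X\le\|x\|\le R\|x\|_X$. Next, for each $i$ let $u_i>0$ be defined by $f_i(u_i)=\tfrac12$ (such $u_i$ exists under the standing assumptions on the coordinate functions), and set $a_i:=u_i e_i$. Then $\{a_1,\dots,a_n\}$ is equilateral in $X$ with common distance $1$: the number $\lambda=\|a_i-a_j\|_X$ is determined by $f_i(u_i/\lambda)+f_j(u_j/\lambda)=1$, and $\lambda=1$ is a solution because $f_i(u_i)+f_j(u_j)=\tfrac12+\tfrac12=1$.

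The role of the hypothesis is to make $X$ smooth at each $a_i$. The gauge $\phi(x)=\sum_k f_k(|x_k|)$ has, at the point $a_i$ (whose only nonzero coordinate is the $i$-th), one-sided partial derivatives $\pm f_k'(0)=0$ in every direction $k\ne i$, so there is no corner coming from the absolute values, and $\phi$ is differentiable at $a_i$ with $\nabla\phi(a_i)=f_i'(u_i)e_i$. Consequently $\|\cdot\|_X$ is Gateaux differentiable at $a_i$ with the unique supporting functional proportional to $e_i^{*}$; equivalently, near $a_i$ the unit sphere of $X$ is a smooth graph over the tangent hyperplane $\{x_i=u_i\}$, and moving $a_i$ along $e_i$ changes each distance $\|a_i-a_j\|_X$ at a definite first-order rate. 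It is precisely this local smoothness (rather than any global modulus, which $X$ need not possess) that is available, which is why the conclusion is stated only for a neighbourhood and only asserts $e(Y)\ge n$.

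With this in hand I would run the fixed point argument. Looking for the equilateral set of $Y$ in the form $b_i=a_i+v_i$ with $v_1=0$ (fixing translations) and each $v_i$ ranging over a small compact convex cell $C_i$, I would define a continuous self-map $\Phi$ of $C=\prod_i C_i$ that, using the smooth normal directions $e_i$ from the previous step, moves each $b_i$ so as to drive the pairwise distances $\|b_i-b_j\|$ toward their common average; fixed points of $\Phi$ are exactly the $\|\cdot\|$-equilateral configurations, and Brouwer supplies one, giving $e(Y)\ge n$. The main obstacle is the self-mapping property $\Phi(C)\subseteq C$: one must show that the restoring effect of these normal moves --- quantified by the non-degenerate first-order behaviour of $\|\cdot\|_X$ at the $a_i$ secured above --- dominates the distortion $\bigl|\,\|b_i-b_j\|-\|b_i-b_j\|_X\,\bigr|=O(R-1)$ uniformly over $C$. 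Because only the local smoothness at the finitely many points $a_i$ is at our disposal, this estimate must be obtained locally and combined with a compactness argument to extract some admissible $R(X)>1$; carrying out this quantitative comparison is the heart of the proof and is the exact analogue, for the gauge $\sum_k f_k(|x_k|)$, of the explicit $\ell_p^n$ estimate ($f(t)=t^p$, so $f'(0)=0$ iff $p>1$) behind $R(p,n)$ in Theorem \ref{swanepoelvilla2}.
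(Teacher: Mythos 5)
Your overall strategy does match the paper's: start from an equilateral configuration supported on the coordinate axes and perturb it into an equilateral set of $Y$ by a Brouwer fixed point argument in the style of Theorem \ref{swanepoelvilla2}. But there are two genuine gaps. First, your opening step --- ``let $u_i>0$ be defined by $f_i(u_i)=\frac12$; such $u_i$ exists under the standing assumptions'' --- is false. A Young function is allowed to take the value $+\infty$, and it can jump there before ever reaching $\frac12$: the function $f_i(t)=0$ for $t\in[0,1]$, $f_i(t)=+\infty$ for $t>1$ (the $\ell_\infty$ coordinate function) satisfies $f_i'(0)=0$ and is admitted by the hypothesis, yet never attains $\frac12$. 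The paper must, and does, treat these indices separately: it sets $c_i=\sup\{x\geq 0: f_i(x)<\infty\}$ for them, gets the lower inequality for free from $f_i(c_i+K\varepsilon)=\infty$, and gets the upper one from continuity of the remaining $f_k$ at $0$. Without this case your base configuration does not even exist.

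Second, and more seriously, the quantitative core --- the self-mapping property of the fixed point map --- is exactly what you leave undone, by your own admission (``carrying out this quantitative comparison is the heart of the proof''). In the paper this is where all the work and the hypothesis $f_i'(0)=0$ actually live: the perturbed points are parametrized by one scalar $\varepsilon_{i,j}\in[0,\beta]$ per pair, monotonicity of the norm (Lemma \ref{rownowaznosc}) sandwiches $\|p_i-p_j\|$ between two modular expressions, and the self-mapping property reduces to the system $f_i(\gamma_i+\beta)+f_j(\gamma_j)>1$ and $f_i(\gamma_i)+f_j(\gamma_j)+\sum_{k\neq i,j}f_k(\beta)\leq 1$. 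The paper then takes $\gamma_i=c_i-\varepsilon$, $\beta=(K+1)\varepsilon$ with $K$ exceeding all ratios $f_j^{-}(c_j)/f_i^{+}(c_i)$, and verifies both inequalities for small $\varepsilon$ by convexity; the hypothesis $f_k'(0)=0$ enters precisely to make $\sum_k f_k((K+1)\varepsilon)=o(\varepsilon)$, so the cost of the transverse perturbations is absorbed by the first-order gain $\varepsilon\left(f_i^{-}(c_i)+f_j^{-}(c_j)\right)$ from retracting $\gamma_i,\gamma_j$ below $c_i,c_j$. Note also that your smoothness framing is slightly off: the modular $\phi(x)=\sum_k f_k(|x_k|)$ need not be differentiable at $a_i$, since a convex $f_i$ can have a corner at $u_i$, so ``$\nabla\phi(a_i)=f_i'(u_i)e_i$'' is not defined in general; the paper explicitly remarks that its auxiliary function is not necessarily differentiable and works only with concavity and one-sided derivatives. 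Your intuition for why $f_i'(0)=0$ helps is the right one, but as written the proposal stops exactly at the step that constitutes the proof.
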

The distance $R(X)$ can be expressed in terms of coordinate functions, but in contrast to Theorem \ref{symm} this dependence is rather complicated. We provide it at the end of the proof. For more background information concerning the theory of Orlicz and Musielak-Orlicz spaces we refer the reader to \cite{musielak}.

The last considered class consists of one co-dimensional subspaces of $\ell_{\infty}^n$. It is not hard to see that, by means of approximation, in order to prove Conjecture \ref{dimension} it would be enough to prove it for every finite dimensional normed space with a polytopal unit ball. It is a folklore result that every such space occurs as a subspace of $\ell_{\infty}^n$ for some $n$. In consequence, for the purpose of establishing Conjecture \ref{dimension} it would suffice to prove it for every subspace of $\ell_{\infty}^n$ with $n \geq 1$. We shall prove it for $(n-1)$-dimensional subspaces. In fact, we can give a much better estimate on the equilateral dimension, depending on the hyperplane defining the subspace.

\begin{twr}
\label{subspace1}
Let $X = \{ (x_1, x_2, \ldots, x_n) \in \mathbb{R}^n: a_1x_1 + a_2x_2 + \ldots + a_nx_n = 0 \}$ be an $(n-1)$-dimensional subspace of the space $\ell_{\infty}^n$ and let $1 \leq k \leq n$ be an integer such that there exists a disjoint partition $\{1, 2, \ldots, n \} = A \cup B$ where $|A|=k$ and $\sum_{i \in A} |a_i| \geq \sum_{i \in B} |a_i|$. Then $e(X) \geq 2^{n-k}$.
\end{twr}

Since we can always take $k=\lceil \frac{n}{2} \rceil$ and $A$ to be set of indexes corresponding to $a_i$'s with maximal absolute value, we obtain

\begin{coll}
\label{wniosek1}
Let $X$ be an $(n-1)$-dimensional subspace of the space $\ell_{\infty}^n$. Then $e(X) \geq 2^{\lfloor \frac{n}{2} \rfloor}$.
\end{coll}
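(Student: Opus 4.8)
The plan is to deduce Corollary~\ref{wniosek1} directly from Theorem~\ref{subspace1}, so the only real task is to choose the partition $\{1,\dots,n\}=A\cup B$ optimally and then verify that the resulting bound $2^{n-k}$ is at least $2^{\lfloor n/2\rfloor}$. Since $2^{n-k}$ decreases as $k$ grows, I want $k$ as small as possible subject to the constraint $\sum_{i\in A}|a_i|\ge\sum_{i\in B}|a_i|$ being satisfiable for \emph{some} set $A$ of size $k$.

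First I would observe that for a fixed cardinality $k$, the best chance of satisfying the inequality is to let $A$ consist of the indices corresponding to the $k$ largest values among $|a_1|,\dots,|a_n|$; this maximizes $\sum_{i\in A}|a_i|$ over all $k$-element subsets and simultaneously minimizes $\sum_{i\in B}|a_i|$. So I would relabel the coordinates so that $|a_1|\ge|a_2|\ge\cdots\ge|a_n|$ and take $A=\{1,2,\dots,k\}$, $B=\{k+1,\dots,n\}$. The next step is to check that with $k=\lceil n/2\rceil$ this choice indeed gives $\sum_{i\in A}|a_i|\ge\sum_{i\in B}|a_i|$. This is where the main (and only) point of the argument lies: because $A$ contains the $\lceil n/2\rceil$ largest terms and $B$ the $\lfloor n/2\rfloor$ smallest, each element of $B$ is bounded above by each element of $A$, and $B$ has no more elements than $A$; pairing the largest $\lfloor n/2\rfloor$ elements of $A$ against $B$ term by term shows $\sum_{i\in A}|a_i|\ge\sum_{i\in B}|a_i|$ immediately.

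With $k=\lceil n/2\rceil$ admissible, Theorem~\ref{subspace1} yields $e(X)\ge 2^{n-k}=2^{n-\lceil n/2\rceil}=2^{\lfloor n/2\rfloor}$, using the identity $n-\lceil n/2\rceil=\lfloor n/2\rfloor$. I do not anticipate any genuine obstacle here: the entire content is the elementary sorting observation that the top half of a sorted sequence has sum at least that of the bottom half. The only mild care needed is the case of odd $n$, where $|A|=\lceil n/2\rceil$ strictly exceeds $|B|=\lfloor n/2\rfloor$, but this only helps the inequality, and the floor/ceiling bookkeeping is routine.
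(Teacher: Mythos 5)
Your proposal is correct and is essentially identical to the paper's own derivation: the author also takes $k=\lceil n/2\rceil$ with $A$ the set of indices of the $\lceil n/2\rceil$ largest $|a_i|$'s and applies Theorem \ref{subspace1}, treating the sorting inequality you verify term by term as immediate. Nothing is missing; your write-up just makes the routine pairing argument explicit.
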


This confirms Conjecture \ref{dimension} in the considered class of spaces, as $2^{\lfloor \frac{n}{2} \rfloor} \geq n$ for $n \geq 6$, while for $n \leq 5$ the dimension of $X$ is at most $4$ and Conjecture \ref{dimension} is known to be true in this case.

Also for this class we can give an analogue of Theorems \ref{brassdekster}, \ref{swanepoelvilla}, \ref{swanepoelvilla2}.

\begin{twr}
\label{subspace2}
Let $X = \{ (x_1, x_2, \ldots, x_n) \in \mathbb{R}^n: a_1x_1 + a_2x_2 + \ldots + a_nx_n = 0 \}$ be an $(n-1)$-dimensional subspace of the space $\ell_{\infty}^n$ and let $1 \leq k \leq n$ be an integer for which there exist disjoint sets $A, B\subset \{1, 2, \ldots, n\}$ such that $|A|=k$, $|A \cup B|=n-1$ and $\sum_{i \in A} |a_i| \geq \sum_{i \in B} |a_i|$. Then $e(Y) \geq n-k$ for every normed space $Y$ such that $d(X, Y) \leq 2$. 
\end{twr}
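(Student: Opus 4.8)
The plan is to transfer the explicit distance-$2$ configuration underlying Theorem~\ref{subspace1} to the perturbed space and then repair it by a Brouwer argument, exactly in the spirit of the proofs of Theorems~\ref{brassdekster} and \ref{swanepoelvilla2}. First I would fix a concrete realization of the hypothesis $d(X,Y)\le 2$: choosing a near-optimal isomorphism $T\colon X\to Y$ and rescaling, I may assume that $Y$ is the \emph{same} underlying hyperplane $X=\{x:\sum_{i=1}^n a_ix_i=0\}$ equipped with a norm $|\cdot|$ satisfying $\|x\|_\infty\le |x|\le 2\|x\|_\infty$ for all $x\in X$, where $\|\cdot\|_\infty$ is the ambient $\ell_\infty^n$ norm restricted to $X$. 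Thus it suffices to produce $m:=n-k$ points in $(X,|\cdot|)$ that are equilateral with respect to $|\cdot|$.

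Next I would recall the building block behind Theorem~\ref{subspace1}. Writing $B=\{b_1,\dots,b_{m-1}\}$ (note $|B|=|A\cup B|-|A|=(n-1)-k=m-1$) and letting $c$ be the single index outside $A\cup B$, the assumption $\sum_{i\in A}|a_i|\ge\sum_{i\in B}|a_i|$ guarantees that whenever the $B$-coordinates are prescribed from $\{-1,1\}$ and $x_c=0$, one can solve the single linear constraint $\sum_{i\in A}a_ix_i=-\sum_{i\in B}a_ix_i$ for the $A$-coordinates inside $[-1,1]$; on the $A$-coordinates all pairwise differences are then at most $2$, while a sign flip on some $B$-coordinate contributes exactly $2$, so the resulting points have pairwise $\ell_\infty$-distance exactly $2$. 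From this I would extract a rigid ``simplex-type'' subconfiguration of $m$ points $p_0,p_1,\dots,p_{m-1}$: a base point $p_0$ (all $B$-coordinates equal to $-1$) together with its flips $p_j$ in the single coordinates $b_1,\dots,b_{m-1}$, using the spare coordinate $c$ and the remaining freedom in the $A$-coordinates to keep every point inside $X$ with all non-$B$ coordinates in $[-1,1]$.

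I would then set up the fixed-point step. Parametrize perturbations of $p_1,\dots,p_{m-1}$ by a compact convex parameter set $P$ (a product of intervals), in which each point is allowed to move along the direction of its distinguished coordinate while the $A$- and $c$-coordinates are readjusted so as to preserve membership in $X$. Define a continuous self-map $\Phi\colon P\to P$ whose fixed points are exactly the parameter values for which the $|\cdot|$-distances $|p_i-p_j|$ all coincide (for instance, by comparing each distance against $|p_0-p_i|$ and converting the discrepancies into coordinate adjustments). Brouwer's Fixed Point Theorem then yields a genuinely $|\cdot|$-equilateral set of $m=n-k$ points, which is the assertion.

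The crux, and the step I expect to be the main obstacle, is verifying that $\Phi$ maps $P$ into itself, i.e.\ that the readjusted configurations never leave the admissible region. This is precisely where the two hypotheses must be used in tandem: the sandwich $\|x\|_\infty\le |x|\le 2\|x\|_\infty$ controls how far each $|\cdot|$-distance can deviate from the reference value $2$, while the inequality $\sum_{i\in A}|a_i|\ge\sum_{i\in B}|a_i|$ guarantees that the compensating $A$-coordinates can always be chosen within $[-1,1]$, so that the dominating coordinate differences remain pinned at $2$ and no coordinate difference overshoots. Once this self-map property is secured, continuity of $\Phi$, and hence the applicability of Brouwer's theorem, is routine.
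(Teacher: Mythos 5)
Your reduction to a norm $|\cdot|$ on the hyperplane $X$ itself satisfying $\|x\|_\infty\le|x|\le 2\|x\|_\infty$ is exactly the paper's first step, and repairing an $\ell_\infty$-configuration by Brouwer is the right general idea, but your fixed-point setup has a genuine dimension-count gap. You let each of the $m-1$ points $p_1,\dots,p_{m-1}$ slide along a \emph{single} distinguished coordinate, so your parameter set $P$ is $(m-1)$-dimensional, while the equilateral condition consists of $\binom{m}{2}-1$ independent scalar equations. A fixed point of a self-map $\Phi\colon P\to P$ can encode only $m-1$ scalar conditions; if you try to enforce more by ``converting the discrepancies into coordinate adjustments,'' a fixed point only says that certain aggregates of discrepancies vanish, which can happen by cancellation without the individual distances being equal. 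The defect is already visible in the $\ell_\infty$ geometry of your flip (cube-type) configuration: if $p_i$ moves outward along coordinate $b_i$ by $t_i$, then $\|p_i-p_j\|_\infty=2+\max(t_i,t_j)$, so the distances within $\{p_1,\dots,p_{m-1}\}$ are not independently tunable by these parameters at all; for $m\ge 4$ there is no reason your family should contain an equilateral configuration for a general $Y$, so Brouwer cannot be made to produce one.

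The paper resolves precisely this by using one parameter per \emph{pair}, not per point, together with a simplex-type rather than cube-type base configuration. It takes $N=\binom{n-k}{2}$ parameters $\varepsilon_{i,j}\in[0,1]$ and defines $p_j(\varepsilon)$ to have $-1$ in its own distinguished coordinate, the values $\varepsilon_{1,j},\dots,\varepsilon_{j-1,j}$ in the coordinates distinguished for $p_1,\dots,p_{j-1}$, the fixed compensating value $b_j$ (equal to $a_j/a_{n-k}$ when $a_{n-k}\neq 0$) in the spare coordinate, and the vector $H(\varepsilon_j,\mathbf{0})$ spread over the $A$-block so that $p_j(\varepsilon)\in X$; the hypothesis $\sum_{i\in A}|a_i|\ge\sum_{i\in B}|a_i|$ is what keeps all these compensations in $[0,1]$. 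This design yields $\|p_i(\varepsilon)-p_j(\varepsilon)\|_\infty=1+\varepsilon_{i,j}$ \emph{exactly}, i.e.\ each pairwise distance is controlled by its own coordinate of the parameter. Then $\varphi_{i,j}(\varepsilon)=1+\varepsilon_{i,j}-|p_i(\varepsilon)-p_j(\varepsilon)|$ maps $[0,1]^N$ into itself as an immediate consequence of the sandwich inequalities (so the step you flagged as the crux is actually trivial in the correct setup), and Brouwer gives a parameter with all $|\cdot|$-distances equal to $1$. Repairing your argument would require giving every pair its own perturbation direction and redesigning the compensations accordingly, which amounts to reconstructing the paper's proof.
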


As before we can always take $k = \lfloor \frac{n}{2} \rfloor$, which gives us

\begin{coll}
\label{wniosek2}
Let $X$ be an $(n-1)$-dimensional subspace of the space $\ell_{\infty}^n$ and let $Y$ be an $(n-1)$-dimensional space such that $d(X, Y) \leq 2$. Then $e(Y) \geq \lceil \frac{n}{2} \rceil$.
\end{coll}

The proofs of Theorems \ref{perm}, \ref{orlicz1} and \ref{subspace1} follow from an elementary continuity argument, while the proofs of Theorems \ref{symm}, \ref{orlicz2} and \ref{subspace2} are based on the approach used in the proof of Theorems \ref{swanepoelvilla} and \ref{swanepoelvilla2}, which is an extension of an idea of Brass and Dekster. Although these proofs run along similar lines, there are some adjustments necessary to fit the argument to each situation. Let us remark that Theorem \ref{symm} is the first example where the method of the Brouwer Fixed Point Theorem is applied to a quite general norm, not defined by a formula, but rather by a geometric property. This leaves the hope that such an approach can be used in an even more general setting.

For a survey on equilateral sets in finite dimensional normed spaces see \cite{swanepoel1}. For related new problems concerning maximal equilateral sets, see \cite{swanepoel2} and \cite{swanepoel3}.

\section{Permutation-invariant and symmetric spaces}
\label{symetryczne}

In this section we prove Theorems \ref{perm} and \ref{symm}. As the Conjecture \ref{dimension} is true for $n=1, 2$ we suppose that $n \geq 3$.

\begin{proof}[Proof of Theorem \ref{perm}]
Let $|| \cdot ||$ be the norm of $X$ and let $e_1, e_2, \ldots, e_n$ be the standard unit basis of $\mathbb{R}^n$. Since the space $X$ is permutation-invariant, it follows that the vectors $e_i$ form an equilateral set of the common length $c=||(1, -1, 0, \ldots, 0)||$. Moreover, for any $t \in \mathbb{R}$ and every $i=1, 2, \ldots, n$, the distance of the vector $t(e_1+e_2 + \ldots + e_n) = (t, t, \ldots, t)$ to $e_i$ is equal to
$$f(t) = ||(t-1, t, t, \ldots, t)||,$$
To obtain an $(n+1)$-th point, forming the equilateral set with the $e_i$'s, we have to find $t_0$ satisfying $f(t_0) = c$. The mapping $f$ is continuous and also $f(t) \to \infty$ as $t \to \infty$. It will thus be enough to show that $f(\frac{1}{n}) \leq c$. But this follows from the triangle inequality, as
$$f \left( \frac{1}{n} \right ) =  \left| \left | \left (-\frac{n-1}{n}, \frac{1}{n}, \ldots, \frac{1}{n} \right) \right| \right|$$ 
$$=\left| \left| \frac{1}{n} \left( -1, 1, 0 \ldots, 0 \right)  + \frac{1}{n} \left( -1, 0, 1, 0 \ldots, 0 \right ) + \ldots + \frac{1}{n} \left (-1, 0, \ldots, 0, 1 \right) \right | \right|$$
$$ \leq \frac{1}{n} \left ( ||(-1, 1, 0 \ldots, 0)|| +||(-1, 0, 1, 0 \ldots, 0)|| + \ldots + ||(-1, 0, \ldots, 1, 0)|| \right )$$
$$=\frac{n-1}{n}c < c.$$
This concludes the proof.
\end{proof}

To give a proof of the next theorem we need some preparation. In the previous section we introduced the notion of permutation-invariant and 1-unconditional spaces. We say that a finite dimensional normed space $X$ is \emph{monotone} if the norm $|| \cdot ||$ of $X$ satisfy the following condition
$$|x_i| \leq |y_i| \text{ for } i=1, 2, \ldots, n \text{ implies } ||(x_1, x_2, \ldots, x_n)|| \leq ||(y_1, y_2, \ldots, y_n)||.$$
Obviously, a monotone space is also $1$-unconditional. The converse is also true.

\begin{lem}
\label{rownowaznosc}
Let $X$ be a finite dimensional normed space. Then $X$ is monotone if and only if $X$ is $1$-unconditional.
\end{lem}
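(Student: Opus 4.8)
The plan is to prove the equivalence in Lemma~\ref{rownowaznosc}. One direction is immediate: if $X$ is monotone, then taking $|x_i| = |y_i|$ for all $i$ (in particular $y_i = \varepsilon_i x_i$ for signs $\varepsilon_i$) gives $\|(x_1,\ldots,x_n)\| \leq \|(\varepsilon_1 x_1, \ldots, \varepsilon_n x_n)\|$ and the reverse inequality by symmetry, so $X$ is $1$-unconditional. The substance of the lemma is the converse: assuming $1$-unconditionality, I must deduce the full monotonicity property.

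For the converse, I would first reduce to a one-coordinate comparison. It suffices to show that if $|x_1| \leq |y_1|$ while $x_i = y_i$ for all $i \geq 2$, then $\|(x_1, x_2, \ldots, x_n)\| \leq \|(y_1, x_2, \ldots, x_n)\|$; the general statement then follows by changing one coordinate at a time along a chain of $n$ such single-coordinate steps. By $1$-unconditionality I may assume all coordinates are nonnegative, so the task becomes: for $0 \leq x_1 \leq y_1$, show $\|(x_1, x_2, \ldots, x_n)\| \leq \|(y_1, x_2, \ldots, x_n)\|$ with the remaining coordinates fixed and nonnegative.

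The key step is a convexity/averaging argument exploiting the sign-flip symmetry. Since $0 \leq x_1 \leq y_1$, I can write $x_1$ as a convex combination $x_1 = \lambda y_1 + (1-\lambda)(-y_1)$ where $\lambda = \frac{1}{2}(1 + x_1/y_1) \in [\frac{1}{2}, 1]$ (the case $y_1 = 0$ being trivial). Then the vector $(x_1, x_2, \ldots, x_n)$ is the convex combination
$$
(x_1, x_2, \ldots, x_n) = \lambda (y_1, x_2, \ldots, x_n) + (1-\lambda)(-y_1, x_2, \ldots, x_n).
$$
Applying the triangle inequality and then using that $\|(-y_1, x_2, \ldots, x_n)\| = \|(y_1, x_2, \ldots, x_n)\|$ by $1$-unconditionality, I get
$$
\|(x_1, x_2, \ldots, x_n)\| \leq \lambda \|(y_1, x_2, \ldots, x_n)\| + (1-\lambda)\|(y_1, x_2, \ldots, x_n)\| = \|(y_1, x_2, \ldots, x_n)\|,
$$
which is exactly the desired single-coordinate monotonicity.

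I expect no serious obstacle here; the only point requiring a little care is organizing the reduction so that the single-coordinate inequality genuinely implies the full multi-coordinate statement. This works because monotonicity is a transitive comparison: given $|x_i| \leq |y_i|$ for all $i$, I interpolate through the intermediate vectors $(y_1, \ldots, y_j, x_{j+1}, \ldots, x_n)$ for $j = 0, 1, \ldots, n$, and the single-coordinate result bounds each consecutive norm by the next. Chaining these $n$ inequalities yields $\|(x_1, \ldots, x_n)\| \leq \|(y_1, \ldots, y_n)\|$, completing the proof.
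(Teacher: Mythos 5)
Your proof is correct. Note, however, that the paper does not actually prove this lemma at all: its entire ``proof'' is a citation to Theorem~5.5.10 of Horn and Johnson's \emph{Matrix Analysis}, so what you have done is supply the self-contained argument that the paper outsources to the literature. Your argument is in fact the standard one (and essentially the one in the cited reference): the easy direction by taking $|x_i|=|y_i|$, and for the converse the averaging trick, writing $x_1=\lambda y_1+(1-\lambda)(-y_1)$ with $\lambda=\tfrac12\bigl(1+x_1/y_1\bigr)$, applying the triangle inequality, and using sign-flip invariance to identify the two norms on the right-hand side, then chaining one coordinate at a time. All steps check out, including the degenerate case $y_1=0$ and the reduction to nonnegative coordinates via $1$-unconditionality. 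One cosmetic point: you state the single-coordinate inequality only for the first coordinate, while the chaining argument needs it in each coordinate position $j$; since your convexity argument never uses anything special about the position of the coordinate being changed (it does not require permutation invariance, only the sign flip in that one coordinate), the identical argument applies verbatim in coordinate $j$, but it would be cleaner to state the claim for an arbitrary coordinate from the start. The upshot of your approach is a fully elementary, reference-free proof, at the cost of a paragraph of work the paper avoids by citation.
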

\begin{proof} See Theorem $5.5.10$ in \cite{matrix}.
\end{proof}

\begin{lem}
\label{supporting}
Let $X$ be a smooth and symmetric $n$-dimensional normed space and let $c>0$ be such that $v = (c, c, 0, \ldots, 0)$ has norm one. Then, the supporting functional of the unit sphere of $X$ at $v$ is of the form $(\frac{1}{2c}, \frac{1}{2c}, 0, \ldots, 0)$.
\end{lem}
\begin{proof}
Let $|| \cdot ||$ be a smooth and symmetric norm in $X$ and let $f$ be the supporting functional of the unit sphere at $v$. Denote by $T$ the linear mapping $T(x_1, x_2, x_3, x_4 \ldots, x_n) = (x_1, x_2, -x_3, x_4 \ldots, x_n)$. Observe that $T$ is an isometry for the norm $|| \cdot ||$ and also $T(v)=v$. In consequence, $f(T(v))=f(v) = 1$ and  $|f(T(x))| \leq ||T(x)|| = ||x||$ for every $x \in \mathbb{R}^n$. It follows that $f \circ T$ is also the supporting functional  at $v$ and hence $f= f \circ T$, since the norm is smooth. Therefore, if $f$ is of the form $f(x) = \langle x, (a_1, a_2, \ldots, a_n) \rangle$ for some real $a_i$'s, then $a_3=0$. A similar argument shows that $a_1=a_2$ and $a_4=a_5 = \ldots = a_n = 0$. Since $1=f(v)=2ca_1$ it follows that $a_1 = \frac{1}{2c}$ and the lemma is proved.
\end{proof}
Bevore proving Theorem \ref{symm} we have to define \emph{the modulus of smoothness} of a normed space $X$. This notion was introduced by Lindenstrauss in \cite{lindenstrauss} and in the equivalent form even earlier by Day in \cite{day}. It is the function $\rho_X: (0, \infty) \to \mathbb{R}$ defined as
$$\rho_X(t) = \sup \left \{ \frac{||x+ty|| + ||x-ty||}{2} -1 : ||x||, ||y|| \leq 1 \right \}.$$
Finite dimensional normed space $X$ is smooth if and only if $\lim_{t \to 0^{+}} \frac{\rho_X(t)}{t} = 0$ (see Fact $9.7$ in \cite{modul}). Therefore, if $X$ is smooth than there exists $\varepsilon_0>0$ such that $\frac{\rho_X(\varepsilon_0)}{\varepsilon_0} \leq \frac{1}{6n}$.

Now we are ready to give the proof of Theorem \ref{symm}.

\begin{proof}[Proof of Theorem \ref{symm}]
We shall follow the approach of Swanepoel and Villa used in the proof of Theorem \ref{swanepoelvilla2}. Suppose that $R=R(X)$ is defined as in the statement of the theorem. Let $X = (\mathbb{R}^n, || \cdot ||)$ and $Y=(\mathbb{R}^n, || \cdot ||_Y)$.  Since $e(Y)=e(Y')$ for every space $Y'$ linearly isometric to $Y$, without loss of generality we may assume that
$$||x||_Y \leq ||x|| \leq R ||x||_Y$$
for every $x \in \mathbb{R}^n$. After an appropriate rescaling we can further suppose that $||e_i||=1$ for every $i=1, 2, \ldots, n$. We fix $\beta, \gamma>0$ and denote by $I$ the set of pairs $\{(i, j): 1 \leq i < j \leq n \},$ consisting of $N=\frac{n(n-1)}{2}$ elements.
For $\varepsilon = (\varepsilon_{i, j})_{(i, j) \in I} \in [0, \beta]^N$, let
$$p_1(\varepsilon) = (-\gamma, 0, \ldots, 0),$$
$$p_j(\varepsilon) = (\varepsilon_{1, j}, \ldots, \varepsilon_{j-1, j}, -\gamma, 0, \ldots, 0), \quad 2 \leq j \leq n-1,$$
$$p_n(\varepsilon) = (\varepsilon_{1, n}, \ldots, \varepsilon_{n-1, n}, -\gamma).$$
Define $\varphi:[0, \beta]^N \to \mathbb{R}^N $ by 
$$\varphi_{i,j}(\varepsilon) =
1+\varepsilon_{i,j}-||{p_i(\varepsilon)-p_j(\varepsilon)}||_Y\\ \text{ for }  1\le i<j\le n.$$
Notice that the $k$-th coordinate of the vector $p_j-p_i$ (where $1 \leq i < j \leq n$) is equal to
$$
\begin{cases}
\varepsilon_{k, j} - \varepsilon_{k, i} &\mbox{ for } 1 \leq k < i\\ 
\varepsilon_{i, j} + \gamma &\mbox{ for } k=i\\
\varepsilon_{k, j} &\mbox{ for } i < k < j \\
\gamma &\mbox{ for } k=j\\ 
0 &\mbox{ for } k>j.$$
\end{cases}
$$
The norm $|| \cdot ||$ is $1$-unconditional and therefore also monotone by Lemma \ref{rownowaznosc}. As a consequence
$$||(\gamma+\varepsilon_{i, j}, \gamma, 0, 0 \ldots 0)|| \leq ||p_j - p_i|| \leq ||(\gamma+\varepsilon_{i, j}, \gamma, \beta, \beta, \ldots, \beta)||.$$
To apply the Brouwer Fixed Point Theorem to $\varphi$ we have to choose parameters $\beta, \gamma$ in such a way that the image of $\varphi$ is contained in $[0, \beta]^N$. For this purpose we estimate
$$\varphi_{i,j}(\varepsilon) =
1+\varepsilon_{i,j}-||{p_i(\varepsilon)-p_j(\varepsilon)}||_Y$$
$$\leq 1+\varepsilon_{i,j} - R^{-1}||{p_i(\varepsilon)-p_j(\varepsilon)}|| \leq 1+\varepsilon_{i,j} - R^{-1}||(\gamma+\varepsilon_{i, j}, \gamma, 0, \ldots, 0)||.$$
Similarly
$$\varphi_{i,j}(\varepsilon) =
1+\varepsilon_{i,j}-||{p_i(\varepsilon)-p_j(\varepsilon)}||_Y \geq 1+\varepsilon_{i,j} - ||{p_i(\varepsilon)-p_j(\varepsilon)}||$$
$$\geq 1+\varepsilon_{i,j} - ||(\gamma+\varepsilon_{i, j}, \gamma, \beta, \ldots, \beta)||.$$
We notice that smoothness of the norm provides differentiability of the function $h(\varepsilon) = 1 + \varepsilon -  R^{-1}||(\gamma+\varepsilon, \gamma, 0, \ldots, 0)||$ for $\varepsilon \geq 0$. Moreover, it is not hard to see that $h(\varepsilon)$ is strictly increasing. In fact, the triangle inequality yields
$$\frac{||(\gamma+\varepsilon + t, \gamma, 0, \ldots, 0)|| - ||(\gamma+\varepsilon, \gamma, 0, \ldots, 0)||}{t} \leq \frac{||(t, 0, \ldots, 0))||}{t}$$
$$=||(1, 0, \ldots, 0)|| = 1,$$
for any $t>0$. Since $R>1$, an analogous argument for $t<0$ proves that $h'(\varepsilon)>0$.

In the same way we can show that $1+\varepsilon - ||(\gamma+\varepsilon, \gamma, \beta, \ldots, \beta)||$ is increasing. Therefore, we have to choose $\beta$ and $\gamma$ satisfying the inequalities
$$||(\gamma+\beta, \gamma, 0 \ldots, 0)|| \geq 1 + \frac{\varepsilon_0}{6n} \: \text { and } \: ||(\gamma, \gamma, \beta, \ldots, \beta)|| \leq 1.$$
Then $R = ||(\gamma+\beta, \gamma, 0 \ldots, 0)|| \geq 1 + \frac{\varepsilon_0}{6n}$ would satisfy conditions of the theorem.

Let $c$ be a positive real number for which $||(c, c, 0, \ldots, 0)||=1$. As $|| \cdot ||$ is monotone and $||(1, 0, 0, \ldots, 0)|| = 1$ we clearly have $c \leq 1$. Let us take $\gamma = c-\varepsilon$, $\beta = 3 \varepsilon$, where $\varepsilon = \frac{\varepsilon_0}{3n}$. According to Lemma \ref{supporting} we know that $f(x) = \langle x, (\frac{1}{2c}, \frac{1}{2c}, 0, \ldots, 0) \rangle$ is the supporting functional of the unit sphere at $x_0=(c, c, 0, \ldots, 0)$. Consider the vector $v=(-1, -1, 3, \ldots, 3) \in \mathbb{R}^n$. As
$$(\gamma, \gamma, \beta, \ldots, \beta)  = (c-\varepsilon, c-\varepsilon, 3\varepsilon, \ldots, 3\varepsilon)=x_0 + \varepsilon v$$
we have to show that $||x_0 + \varepsilon v|| \leq 1.$

From the triangle inequality and assumption $||e_i||=1$, for $1 \leq i \leq n$, it follows that
$$||v|| \leq 3n-4 < 3n.$$
By the definition of $\rho_X$ we have
$$||x_0 + \varepsilon v| + ||x_0 - \varepsilon v||=\left| \left | x_0 + \varepsilon_0 \frac{v}{3n} \right | \right | + \left | \left |x_0 - \varepsilon_0 \frac{v}{3n} \right| \right| \leq 2\rho_X(\varepsilon_0)+2,$$
or equivalently
$$\frac{||x_0 + \varepsilon v||}{\varepsilon} + \frac{||x_0 - \varepsilon v||-1}{\varepsilon} \leq \frac{2p_X(\varepsilon_0)}{\varepsilon} + \frac{1}{\varepsilon}.$$
Since $||x_0 - \varepsilon v||\geq f(x_0 - \varepsilon v) = 1 + \frac{\varepsilon}{c}$ we have
$$\frac{||x_0 - \varepsilon v||-1}{\varepsilon} \geq \frac{1}{c} \geq 1.$$
From the choice of $\varepsilon_0$ it follows that
$$\frac{2\rho_X(\varepsilon_0)}{\varepsilon} = \frac{6n\rho_X(\varepsilon_0)}{\varepsilon_0} \leq 1.$$
Combining this estimations gives us
$$\frac{||x_0 + \varepsilon v||}{\varepsilon} + 1 \leq \frac{||x_0 + \varepsilon v||}{\varepsilon} + \frac{||x_0 - \varepsilon v||-1}{\varepsilon} \leq \frac{2\rho_X(\varepsilon_0)}{\varepsilon} + \frac{1}{\varepsilon} \leq 1 + \frac{1}{\varepsilon},$$
and hence $||x_0 + \varepsilon v|| \leq 1.$

To obtain the desired conclusion observe that for $w=(2, -1, 0, \ldots, 0) \in \mathbb{R}^n$ we have $f(w)=\frac{1}{2c}$ and therefore
$$R = ||c+2\varepsilon, c-\varepsilon, 0, \ldots, 0)||  = ||x_0 + \varepsilon w|| \geq f(x_0 + \varepsilon w) =  1 + \frac{\varepsilon}{2c} = 1 + \frac{\varepsilon_0}{6nc} \geq 1 + \frac{\varepsilon_0}{6n}.$$
This completes the proof.
\end{proof}

\section{Musielak-Orlicz spaces}
\label{orlicze}
In this section we use symbols $f^{-}(a)$ and $f^{+}(a)$ for the left and right derivative respectively. Similarly like before we assume that $n \geq 3$.

\begin{lem}
\label{apr}
If $e(X) \geq n+1$ for any $n$-dimensional Musielak-Orlicz space with strictly increasing and finite valued coordinate functions $f_1, f_2, \ldots, f_n$, then $e(X) \geq n+1$ for any $n$-dimensional Musielak-Orlicz space.
\end{lem}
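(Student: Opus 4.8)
The plan is to prove the lemma by an approximation-and-compactness argument: approximate an arbitrary Musielak--Orlicz space by a sequence of Musielak--Orlicz spaces whose coordinate functions are strictly increasing and finite valued, apply the hypothesis to each of these to obtain equilateral $(n+1)$-point sets, and pass to the limit. The two defects to be removed from a general Young function $f_i$ are that it may be constant near the origin (failing strict monotonicity) and that it may take the value $\infty$ (failing finiteness).

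First I would construct, for each coordinate function $f_i$, a sequence of strictly increasing, finite valued Young functions $f_{i,m}$ converging to $f_i$ pointwise on $[0,\infty)$, with the value $\infty$ allowed in the limit. Writing $b_i = \sup\{t \ge 0 : f_i(t) < \infty\}$ and choosing $b_{i,m} \uparrow b_i$ with $b_{i,m} < b_i$, let $f^{\flat}_{i,m}$ agree with $f_i$ on $[0, b_{i,m}]$ and be continued beyond $b_{i,m}$ by the tangent line of slope $f_i^{+}(b_{i,m})$. Convexity of $f_i$ guarantees that $f^{\flat}_{i,m}$ is a finite valued Young function with $f^{\flat}_{i,m} \le f_i$ and $f^{\flat}_{i,m} \uparrow f_i$ pointwise; indeed the slopes $f_i^{+}(b_{i,m})$ blow up as $b_{i,m} \to b_i$, which forces $f^{\flat}_{i,m}(t) \to \infty$ at every $t$ with $f_i(t) = \infty$. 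Setting $f_{i,m}(t) = f^{\flat}_{i,m}(t) + \frac{1}{m} t^2$ then restores strict monotonicity while preserving finiteness, convexity, and the convergence $f_{i,m} \to f_i$.

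Let $X_m$ be the Musielak--Orlicz space with coordinate functions $f_{1,m}, \ldots, f_{n,m}$, and let $K_m$, $K$ be the corresponding unit balls. The key analytic step is to upgrade the pointwise convergence of the coordinate functions to convergence of the norms. Since the modular $\sum_i f_{i,m}(|x_i|)$ converges to $\sum_i f_i(|x_i|)$ for every fixed $x$, every point in the interior of $K$ eventually belongs to $K_m$ and every point outside $K$ is eventually outside $K_m$; as all these bodies are symmetric convex bodies containing $0$ in their interior, this yields $K_m \to K$ in the Hausdorff metric, and hence $||x||_{X_m} \to ||x||_X$ uniformly on bounded sets.

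Finally I would run the compactness argument. By hypothesis each $X_m$ contains an equilateral set of $n+1$ points; after rescaling and translating, operations that preserve equilaterality, I may assume the common distance equals $1$ and that one of the points is the origin. Uniform comparability of $||\cdot||_{X_m}$ and $||\cdot||_X$ for large $m$ bounds the remaining points, so a subsequence of the $(n+1)$-tuples converges, and uniform convergence of the norms gives $||p_i - p_j||_X = 1$ for all $i \ne j$, with the limit points automatically distinct. This produces an equilateral set of $n+1$ points in $X$. I expect the \emph{main obstacle} to be the passage from pointwise convergence of the generating functions to convergence of the norms, compounded by the need to cure lack of strict monotonicity and infinite values simultaneously: a monotone approximation from below repairs the former but not the latter, and a perturbation from above does the opposite, so the combination of the tangent-line truncation $f^{\flat}_{i,m}$ with the term $\frac{1}{m}t^2$ is exactly what reconciles the two.
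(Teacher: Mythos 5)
Your overall plan --- approximate the coordinate functions, apply the hypothesis to the approximating spaces, and pass to the limit by compactness --- is exactly the paper's, and the limiting half of your argument (normalize to distance $1$ with one point at the origin, bound the points by uniform comparability of the norms, extract a convergent subsequence) is sound. The genuine gap is in the construction of the approximants, at the very step you single out as the crux: the claim that the tangent slopes $f_i^{+}(b_{i,m})$ blow up as $b_{i,m}\to b_i$ is false. Convexity makes these slopes non-decreasing, but they tend to infinity only when $f_i(x)\to\infty$ as $x\to b_i^{-}$; left-continuity permits a Young function to stay bounded on $[0,b_i]$ and jump to $+\infty$ immediately afterwards. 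The paper's own example $\ell_\infty^n$ is of exactly this type: $f_i(t)=0$ on $[0,1]$ and $f_i(t)=\infty$ for $t>1$. There every tangent slope is $0$, so $f^{\flat}_{i,m}\equiv 0$ (not even a Young function), and $f_{i,m}(t)=t^2/m$; the unit balls $K_m$ are Euclidean balls of radius $\sqrt{m}$, which do not converge to $K=[-1,1]^n$, and $\|\cdot\|_{X_m}=\|\cdot\|_2/\sqrt{m}\to 0$, not $\|\cdot\|_\infty$. The compactness step collapses as well: your equilateral sets in $X_m$ at mutual distance $1$ have Euclidean diameter $\sqrt{m}$, hence are unbounded. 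Nor is this confined to the extreme example: for $n=2$ and $f_1=f_2$ with $f_i(t)=t/2$ on $[0,1]$, $\infty$ beyond, the space $X$ is $\ell_\infty^2$, while your approximating norms converge to $\tfrac{1}{2}\|\cdot\\|_1$; an equilateral triangle for that limit norm, such as $(0,0),(1,1),(1,-1)$, is not equilateral in $\ell_\infty^2$. Since jump-type coordinate functions are precisely how $\ell_\infty$-like spaces arise as Musielak--Orlicz spaces, this case is the heart of the lemma, not a fringe technicality.

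The missing idea is that the continuation slope must be forced to infinity by hand rather than inherited from $f_i$. That is what the paper does: it keeps the strictly increasing perturbation $\frac{k-1}{k}f_i(x)+\frac{x}{k}$ on all of $[0,b_i]$ and continues linearly beyond $b_i$ itself, with a slope carrying an explicit factor of $k$, so that for each fixed $t>b_i$ the approximants tend to $\infty$ and the Luxemburg norms converge to that of $X$. In your setup the repair is a case split: if $f_i^{+}(x)$ stays bounded as $x\to b_i^{-}$ (which forces $f_i(b_i)<\infty$ by left-continuity), do not truncate at $b_{i,m}<b_i$; keep $f_i$ on all of $[0,b_i]$ and continue past $b_i$ with slope $f_i^{-}(b_i)+m$, which preserves convexity and yields $f_{i,m}(t)\to\infty$ for every $t>b_i$. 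Reserve your tangent-line truncation for the complementary case, where the slopes genuinely blow up (there no finite-sloped convex extension past $b_i$ exists, so truncating strictly before $b_i$ is then the right move). With this correction, together with your $t^2/m$ term for strict monotonicity, the rest of your argument (pointwise convergence of the modulars, Hausdorff convergence of the unit balls, uniform comparability, passage to the limit) goes through. A side remark from comparing with the paper: the paper's continuation slope equals $\frac{f_{i,k}(b_i)}{b_i}+(k-1)f_i^{-}(b_i)+1$, which also fails to blow up in the degenerate case $f_i\equiv 0$ on $[0,b_i]$, so the forced-slope continuation just described is the safe way to implement the approximation in all cases.
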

\begin{proof}
The proof goes by a standard approximation argument. Suppose that Conjecture \ref{dimension} holds for any $n$-dimensional Musielak-Orlicz space satisfying the condition from the lemma, and let $X$ be any $n$-dimensional Musielak-Orlicz space with the coordinate functions $f_1, f_2, \ldots, f_n$. Let $a_i = \sup \{ x \in \mathbb{R}: f_i(x) = 0 \}$ and $b_i = \sup \{ x \in \mathbb{R}: f_i(x) < \infty \}$ for $i=1, 2, \ldots, n$. For any integer $k > 0 $ let us define $f_{i, k}(x) = \frac{k-1}{k}f(x) + \frac{x}{k}$ for $x \in [0, b_i]$ and $i=1, 2, \ldots, n$. If $b_i=\infty$, that is, the function $f_i$ takes only finite values, then this already defines $f_{i, k}$ on $[0, +\infty)$. In the other case, let $f_{i,k}(x) = \left ( \frac{f_{i,k}(b_i)}{b_i} + kf^{-}_{i,k}(b_i) \right )x - kf^{-}_{i,k}(b_i)b_i$ for $x \in (b_i, \infty)$. It is not hard to see that each $f_i,k$ is strictly increasing and finite valued Young function. Moreover, if we denote by $|| \cdot ||_k$ the Luxemburg norm associated to $f_{1, k}, f_{2, k}, \ldots, f_{n, k}$ for $k>0$, then it is straightforward to check that
$$\lim_{k \to \infty} ||x||_k = ||x||,$$
for every $x \in \mathbb{R}^n$.

By assumption, we can find a set of $n+1$ equidistant points in each norm $|| \cdot ||_k$. Let $0, p_{1, k}, p_{2, k}, \ldots, p_{n, k}$ be an equilateral set of a common distance $1$ in the norm $|| \cdot ||_k$. The sequence $(p_{1,k})_{k>0}$ is bounded and therefore it has a convergent subsequence to some $p_1 \in \mathbb{R}^n$. After repeating this argument $n$ times we get $n$ points $p_1, p_2, \ldots, p_n$ which form, along with $0$, an equilateral set in the norm $|| \cdot ||$. This concludes the proof of the lemma.
\end{proof}

\begin{proof}[Proof of Theorem \ref{orlicz1}.]
Let $f_1, f_2, \ldots, f_n$ be the coordinate functions of $X$, that is the norm $|| \cdot ||$ of $X$ is given by
$$||(x_1, x_2, \ldots, x_n)|| = \inf \left \{ r>0: \sum_{i=1}^{n} f_i \left ( \frac{|x_i|}{r} \right ) \leq 1 \right \}.$$
By the preceeding lemma, we can assume that all functions $f_i$'s are strictly increasing with finite values. It is easy to see that in such a setting, the unit sphere of $X$ consists of exactly these vectors $x=(x_1, x_2, \ldots, x_n)$ for which $\sum_{i=1}^{n} f_i \left (|x_i| \right )= 1.$

Since all $f_i$ are continuous mappings with image equal to $[0, +\infty)$, for all $i=1, 2, \ldots, n$ there exists $c_i>0$ such that $f_i(c_i)=\frac{1}{2}$. One easily verifies that $c_ie_i$ are equidistant with the common distance equal to $1$. To prove the theorem, it will thus be sufficient to find a point $t=(t_1, t_2, \ldots, t_n)$ at distance $1$ to every $c_ie_i$.

For $i=1, 2, \ldots, n$ consider the function $g_i: [0, c_i] \to \mathbb{R}$ defined as $g_i(x) = f_i(c_i - x) - f_i(x)$. Since each $f_i$ is strictly increasing it easily follows that each $g_i$ is a strictly decreasing, continuous function with the image $[-\frac{1}{2}, \frac{1}{2}]$. Hence, each $g_i$ has continuous inverse $g^{-1}_i: [-\frac{1}{2}, \frac{1}{2}] \to [0, c_i]$. Fix $t_1 \in [0, c_1]$ and take $t_i = g^{-1}_i(g_1(t_1))$ for $2 \leq i \leq n$, so that $g_i(t_i) = g_1(t_1)$. Then
$$f_i(c_i - t_i) + \sum_{1 \leq j \leq n, j \neq i} f_j(t_j) = g_i(t_i) + \sum_{1 \leq j \leq n} f_j(t_j)$$
$$= g_1(t_1) + \sum_{1 \leq j \leq n} f_j(t_j) = f_1(c_1 - t_1) + \sum_{2 \leq j \leq n} f_j(t_j),$$
for any $i=1, 2, \ldots, n$. This shows that $t=(t_1, t_2, \ldots, t_n)$ is equidistant to every $c_ie_i$. We have to choose a $t_1 \in [0, c_1]$ such that the common distance is equal to $1$. For this purpose, let us define
$$h(t_1) = g_1(t_1) + \sum_{1 \leq j \leq n} f_j(g^{-1}_j(g_1(t_1))) = g_1(t_1) + \sum_{1 \leq j \leq n} f_j(t_j).$$
It is clear that $h$ is continuous, $h(0) = \frac{1}{2}  <  1$ and $h(c_1) = -\frac{1}{2} + \frac{n}{2} \geq 1$. Hence $h(t_1)=1$ for some $t_1 \in [0, c_1]$ and the proof is completed.

\end{proof}

\begin{proof}[Proof of Theorem \ref{orlicz2}.]
We follow a similar idea to the proof of Theorem \ref{symm}. The constant $R=R(X)$ shall be defined later on in the proof. Let $X = (\mathbb{R}^n, || \cdot ||)$ and $Y=(\mathbb{R}^n, || \cdot ||_Y)$. As in the proof of Theorem \ref{symm} we can assume that
$$||x||_Y \leq ||x|| \leq R ||x||_Y$$
for every $x \in \mathbb{R}^n$. After an appropriate rescaling we can further suppose that $||e_i||<1$ for every $i=1, 2, \ldots, n$. Moreover, let us also assume that $f_1, f_2, \ldots, f_m$ (where $0 \leq m \leq n$) are exactly these coordinate functions for which there does not exist $c_i>0$ such that $f_i(c_i) = \frac{1}{2}$. It is clear that these coordinate functions have to attain the value $\infty$, so let $c_i = \sup \{ x \geq 0: f_i(x) < \infty \}$ for $i \leq m$ (in particular $f_i(c_i) < \frac{1}{2}$ for $0 \leq i \leq m$).

We fix $\beta, \gamma_1, \gamma_2, \ldots, \gamma_n>0$ and denote by $I$ the set of pairs $\{(i, j): 1 \leq i < j \leq n \},$ consisting of $N=\frac{n(n-1)}{2}$ elements.
For $\varepsilon = (\varepsilon_{i, j})_{(i, j) \in I} \in [0, \beta]^N$, let
$$p_1(\varepsilon) = (-\gamma_1, 0, \ldots, 0),$$
$$p_j(\varepsilon) = (\varepsilon_{1, j}, \ldots, \varepsilon_{j-1, j}, -\gamma_j, 0, \ldots, 0), \quad 2 \leq j \leq n-1,$$
$$p_n(\varepsilon) = (\varepsilon_{1, n}, \ldots, \varepsilon_{n-1, n}, -\gamma_n).$$
Define $\varphi:[0, \beta]^N \to \mathbb{R}^N $ by
$$\varphi_{i,j}(\varepsilon) =
1+\varepsilon_{i,j}-||{p_i(\varepsilon)-p_j(\varepsilon)}||_Y\\ \text{ for }  1\le i<j\le n.$$
Notice that the $k$-th coordinate of the vector $p_j-p_i$ (where $1 \leq i < j \leq n$) is equal to
$$
\begin{cases}
\varepsilon_{k, j} - \varepsilon_{k, i} &\mbox{ for } 1 \leq k < i\\ 
\varepsilon_{i, j} + \gamma_i &\mbox{ for } k=i\\
\varepsilon_{k, j} &\mbox{ for } i < k < j \\
\gamma_j &\mbox{ for } k=j\\ 
0 &\mbox{ for } k>j.$$
\end{cases}
$$
Monotonicity of the norm $|| \cdot ||$ (Lemma \ref{rownowaznosc}) yields
$$||(0, \ldots, 0, \gamma_i+\varepsilon_{i, j}, 0, \ldots, 0, \gamma_j, 0 \ldots 0)|| \leq ||p_j - p_i|| \leq $$
$$||(\beta, \ldots, \beta, \gamma_i+\varepsilon_{i, j}, \beta, \ldots, \beta, \gamma_j, \beta, \ldots, \beta)||.$$
We want to choose parameters $\beta, \gamma_1, \ldots, \gamma_n$ in such a way that the image of $\varphi$ is contained in $[0, \beta]^N$ and then apply the Brouwer Fixed Point Theorem. For this purpose, we estimate
$$\varphi_{i,j}(\varepsilon) =
1+\varepsilon_{i,j}-||{p_i(\varepsilon)-p_j(\varepsilon)}||_Y \leq 1+\varepsilon_{i,j} - R^{-1}||{p_i(\varepsilon)-p_j(\varepsilon)}||$$
$$\leq 1+\varepsilon_{i,j} - R^{-1}||(0, \ldots, 0, \gamma_i+\varepsilon_{i, j}, 0, \ldots, 0, \gamma_j, 0 \ldots 0)||,$$
$$\varphi_{i,j}(\varepsilon) =
1+\varepsilon_{i,j}-||{p_i(\varepsilon)-p_j(\varepsilon)}||_Y \geq 1+\varepsilon_{i,j} - ||{p_i(\varepsilon)-p_j(\varepsilon)}||$$
$$\geq 1+\varepsilon_{i,j} - ||(\beta, \ldots, \beta, \gamma_i+\varepsilon_{i, j}, \beta, \ldots, \beta, \gamma_j, \beta, \ldots, \beta)||.$$
The function $h(\varepsilon) =  1 + \varepsilon -  R^{-1} ||(0, \ldots, 0, \gamma_i+\varepsilon, 0, \ldots, 0, \gamma_j, 0 \ldots 0)||$ is not necessarily differentiable for $\varepsilon \geq 0$, in contrast to the proof of Theorem \ref{symm}. Nevertheless, it is straightforward to check that it is concave and therefore it has left and right derivative in every $\varepsilon>0$. Taking into account that $||e_i|| < 1$ for $i=1, 2, \ldots, n$ we can thus repeat the argument used in the proof of Theorem \ref{symm}, to prove that it is increasing for $\varepsilon \geq 0$. Analogously, $1+\varepsilon -  ||(\beta, \ldots, \beta, \gamma_i+\varepsilon, \beta, \ldots, \beta, \gamma_j, \beta, \ldots, \beta)||$ is also increasing. Therefore, in order to have $0 \leq \varphi_{i, j}(\varepsilon) \leq \beta$ we have to choose $\beta$ and $\gamma_1, \gamma_2, \ldots, \gamma_n$ satisfying the system of inequalities
$$||(0, \ldots, 0, \gamma_i+\beta, 0, \ldots, 0, \gamma_j, 0 \ldots 0)|| > 1,$$
$$||(\beta, \ldots, \beta, \gamma_i, \beta, \ldots, \beta, \gamma_j, \beta, \ldots, \beta)|| \leq 1,$$
for all $1 \leq i < j \leq n$. Then we could take 
$$R=\min_{(i, j) \in I}  ||(0, \ldots, 0, \gamma_i+\beta, 0, \ldots, 0, \gamma_j, 0 \ldots 0)||.$$

The above system of inequalities is clearly equivalent to
$$f_i(\gamma_i + \beta) + f_j(\gamma_j) > 1,$$
$$f_i(\gamma_i) + f_j(\gamma_j) + \sum_{k \neq i,j} f_k(\beta) \leq 1,$$
for all $(i, j) \in I$. Let us define $\gamma_i = c_i - \varepsilon$ and $\beta=(K+1) \varepsilon$, where 
$$K > \max \left \{ \frac{f_j^{-}(c_j)}{f_i^{+}(c_i)}: m < i < j \leq n \right \}.$$ We shall prove that the desired conditions are satisfied for sufficiently small $\varepsilon>0$. 

For this purpose let us consider two cases:\\
\textbf{1}. $i > m$ (and consequently $j > m$). Let 
$$g(\varepsilon) = f_i(c_i + K\varepsilon) + f_j(c_j - \varepsilon)$$
for $\varepsilon \in [0, c_j]$. As $g$ is convex we have
$$\frac{g(\varepsilon) - g(0)}{\varepsilon} \geq g^{+}(0) = Kf^{+}_i(c_i) - f^{-}_j(c_j)>0.$$
for $\varepsilon>0$. Since $g(0)=1$, it follows that $g(\varepsilon)>1$ for $\varepsilon>0$.

Now let us consider
$$h(\varepsilon) = f_i(c_i - \varepsilon) + f_j(c_j - \varepsilon) + \sum_{k \neq i,j} f_k((K+1)\varepsilon).$$
It is clear that $f^{-}_i(c_i), f^{-}_j(c_j)>0$. As $f'_k(0)=0$ for every $1 \leq k \leq n$ we can therefore choose $\varepsilon$ such that $\sum_{k=1}^{n} \frac{f_k((K+1)\varepsilon)}{\varepsilon} \leq f^{-}_i(c_i) + f^{-}_j(c_j)$. Then
$$\frac{h(\varepsilon)}{\varepsilon} = \frac{f_i(c_i - \varepsilon)}{\varepsilon} + \frac{f_j(c_j - \varepsilon)}{\varepsilon} + \frac{\sum_{k \neq i,j} f_k((K+1)\varepsilon)}{\varepsilon}$$ $$=\frac{f_i(c_i - \varepsilon)-f_i(c_i)}{\varepsilon} + \frac{f_j(c_j - \varepsilon)-f_j(c_j)}{\varepsilon} + \frac{\sum_{k \neq i,j} f_k((K+1)\varepsilon)}{\varepsilon}+\frac{f_i(c_i) + f_j(c_j)}{\varepsilon}$$
$$\leq -(f^{-}_i(c_i) + f_j^{-}(c_j)) + (f^{-}_i(c_i) + f_j^{-}(c_j)) + \frac{1}{\varepsilon} = \frac{1}{\varepsilon}.$$
and hence for such chosen $\varepsilon$ we have $h(\varepsilon) \leq 1$. This proves that for $i>m$ we can choose $\varepsilon>0$ satisfying the system of inequalities above.
\\
\textbf{2.} $i \leq m$. Then $f_i(\gamma_i + \beta) = f_i(c_i + K \varepsilon)= \infty$ for $\varepsilon>0$, so the first inequality is satisfied. For the second one, observe that $f_i(c_i - \varepsilon) + f_j(c_j - \varepsilon) < f_i(c_i) + f_j(c_j) < 1$. Since $f_k(0)=0$ and $f_k$ are continuous in $0$ for all $k$, it is clear that for sufficiently small $\varepsilon>0$ the second inequality will also hold. More precisely, it is enough to take $\varepsilon>0$ satisfying $\sum_{k=1}^{n} f_k((K+1)\varepsilon) \leq 1 - f_i(c_i) - f_j(c_j).$

This gives the desired conclusion. To summarize we shall give more comprehensive expression for $R(X)$ in terms of the coordinate functions.
$$R(X) \geq \min_{(i, j) \in I}  ||(0, \ldots, 0, c_i + K\varepsilon_0, 0, \ldots, 0, c_j - \varepsilon_0, 0 \ldots 0)||$$
$$=\min_{(i, j) \in I} \inf \left \{ \lambda: f_i \left ( \frac{c_i + K\varepsilon_0}{\lambda} \right ) + f_j \left ( \frac{c_j - \varepsilon_0}{\lambda} \right )  \leq 1 \right \},$$
where
$$K > \max \left \{ \frac{f_j^{-}(c_j)}{f_i^{+}(c_i)}: m < i < j \leq n \right \}$$
and $\varepsilon_0>0$ satisfies conditions
$$\sum_{k=1}^{n} \frac{f_k((K+1)\varepsilon_0)}{\varepsilon_0} \leq \min \{ f^{-}_i(c_i) + f^{-}_j(c_j) : m < i < j \leq n \},$$
$$\sum_{k=1}^{n} f_k((K+1)\varepsilon_0) \leq \min \{ 1 - f_i(c_i) - f_j(c_j) : 1 \leq i \leq m, \: i < j \leq n \}.$$



\end{proof}

\section{Subspaces of $\ell_{\infty}^n$ of codimension one}
\label{podprz}
In this section we shall use the following notation: if $v_i \in \mathbb{R}^{n_i}$ for $i=1, 2, \ldots, k$ then by $(v_1, v_2, \ldots, v_k)$ we mean a standard concatenation in  $\mathbb{R}^{n_1+n_2 + \ldots + n_k}$. We will also use the symbol $\mathbf{0}_n$ to distinguish the zero vector of the space $\mathbb{R}^n$.

\begin{proof}[Proof of Theorem \ref{subspace1}]
Note that every mapping of the form 
$$T: \ell^n_{\infty} \ni (x_1, x_2, \ldots, x_n) \to (c_1 x_1, c_2 x_2, \ldots, c_n x_n) \in \ell^n_{\infty},$$
where $c_i \in \{-1, 1\}$ for $i=1, 2, \ldots, n$, is a linear isometry. Since $e(X')=e(X)$ for every space $X'$ linearly isometric to $X$, we can therefore assume that the coefficients $a_i$ are nonnegative. Let us suppose that $0 \leq a_1 \leq a_2 \leq \ldots \leq a_n$. In particular $\sum_{i=1}^{n-k} a_i \leq \sum_{i=n-k+1}^{n} a_i$. Consider the mappings $h: \mathbb{R}^{n-k} \to \mathbb{R}$, $H: \mathbb{R}^{n-k} \to \mathbb{R}^{k}$ defined as 
$$h(x) =\frac{\sum_{i=1}^{n-k} a_ix_i}{\sum_{i=n-k+1}^{n} a_i}, \quad H(x) = (-h(x), -h(x), \ldots, -h(x)).$$
From the assumption on the coefficients $a_i$'s it follows that $|h(x)| \leq ||x||_{\infty}$ for every $x \in \mathbb{R}^{n-k}$. Note also that $(x, H(x)) \in X$ for every $x \in \mathbb{R}^{n-k}$. Let
$$ S = \left \{  \left(c, H(c) \right ) : c \in \{1, -1 \}^{n-k} \right \} \subset X.$$
Every two distinct elements of $S$ differ in at least one of the first $n-k$ coordinates, and the absolute value of every other coordinate is bounded by $1$ (since $||H(c)||_{\infty} \leq ||c||_{\infty}=1$ for $c \in \{-1, 1\}^{n-k}$). It follows that every two elements of $S$ are at distance $2$ in the $\ell_{\infty}$ norm. We have thus obtained an equilateral set in $X$ of cardinality $2^{n-k}$ and the result follows.

\end{proof}


\begin{proof}[Proof of Theorem \ref{subspace2}]
Similarly to the proof of Theorem \ref{subspace1} we can suppose that $0 \leq a_1 \leq a_2 \leq \ldots \leq a_n$. In particular $\sum_{i=1}^{n-k-1} a_i \leq \sum_{i=n-k+1}^{n} a_i$. Let us further assume that linear structure of $Y$ is identified with the linear structure of $X$ and the norm $|| \cdot ||$ of $Y$ satisfies
$$||x|| \leq ||x||_{\infty} \leq 2||x||,$$
for every $x \in X$ (which is again possible by passing to a suitable space $Y'$ linearly isometric to $Y$).  Denote by $I$ the set of pairs $\{(i, j): 1 \leq i < j \leq n-k \},$ consisting of $N=\frac{(n-k)(n-k-1)}{2}$ elements.

We shall introduce mappings $p_j:[0,1]^N \to \mathbb{R}$ for $j=1, 2, \ldots, n-k$. For $\varepsilon = (\varepsilon_{i, j})_{(i, j) \in I} \in [0, 1]^N$ let us define

$$p_j(\varepsilon)=
\begin{cases}
\left (-1, \mathbf{0}_{n-k-2}, b_1, \mathbf{0}_{k} \right) &\mbox{for } j=1,\\
\left (\varepsilon_j, -1, \mathbf{0}_{n-k-j-1}, b_j, H(\varepsilon_j, \mathbf{0}_{n-k-j}) \right) &\mbox{for } 2 \leq j \leq n-k-1,\\
(\varepsilon_{n-k}, 0, H(\varepsilon_{n-k})) &\mbox{for } j=n-k,\\
\end{cases}$$

where 
$$\varepsilon_j=( \varepsilon_{1, j}, \varepsilon_{2, j}, \ldots, \varepsilon_{j-1, j}) \in \mathbb{R}^{j-1} \qquad (2 \leq j \leq n-k),$$
$$b_j= 
\begin{cases}
 \frac{a_j}{a_{n-k}} &\mbox{if } a_{n-k} \neq 0\\
0 &\mbox{if } a_{n-k}=0
\end{cases}
\qquad (1 \leq j \leq n-k),$$
$$h:\mathbb{R}^{n-k-1} \to \mathbb{R}, \quad h(x)= \frac{\sum_{i=1}^{n-k-1} a_ix_i}{\sum_{i=n-k+1}^{n} a_i},$$
$$H:\mathbb{R}^{n-k-1} \to \mathbb{R}^k, \quad H(x) = (-h(x), -h(x), \ldots, -h(x)) \in \mathbb{R}^k.$$

From the assumption on the coefficients $a_i$'s it follows that $0 \leq h(x) \leq 1$ for $x \in [0, 1]^{n-k-1}$. Moreover $p_j(\varepsilon) \in X$ for every $j=1, 2, \ldots, n-k$. Indeed, suppose that $p_j(\varepsilon)=(q_1, q_2, \ldots, q_n)$. If $a_{n-k}=0$ then $a_j=0$ and $b_j=0$.  If $a_{n-k} \neq 0$ then $b_j = \frac{a_j}{a_{n-k}}$. In each case
$$a_j q_j + a_{n-k} q_{n-k} =  -a_j + a_{n-k}b_j = -a_j + a_j = 0.$$
Furthermore
$$\sum_{ i \neq j, i \neq n-k } a_i q_i= a_1 \varepsilon_{1, j} + \ldots + a_{j-1} \varepsilon_{j-1, j} - (a_{n-k+1} + a_{n-k+2}  + \ldots + a_n)h(\varepsilon_j, \mathbf{0}_{n-k-j})$$
$$=a_1 \varepsilon_{1, j} + \ldots + a_{j-1} \varepsilon_{j-1, j} - (a_1 \varepsilon_{1, j} + \ldots + a_{j-1} \varepsilon_{j-1, j}) = 0.$$

Now we claim that $||p_j (\varepsilon)- p_l(\varepsilon)||_{\infty} = 1 + \varepsilon_{j, l}$ for $1 \leq  j < l \leq m$. In fact, let $p_j(\varepsilon)=(q_1, q_2, \ldots, q_n)$ and $p_l(\varepsilon)=(r_1, r_2, \ldots, r_n)$. Then $|q_j - r_j|=1+\varepsilon_{j,l}$ and therefore $||p_j (\varepsilon)- p_l(\varepsilon)||_{\infty} \geq 1 + \varepsilon_{j, l}$. On the other hand, we have $q_i, r_i \in [0, 1]$ for $1 \leq i \leq n-k, i \neq j$ and $q_i, r_i \in [-1, 0]$ for $n-k < i \leq n$. We conclude that that $|q_i-r_i| \leq 1$ for every $i \neq j$ and our claim follows.

Now we can proceed in the same way as Swanepoel and Villa did in \cite{swanepoel2}. Define $\varphi:[0, 1]^N \to \mathbb{R}^N $ by 
$$\varphi_{i,j}(\varepsilon) = 1+\varepsilon_{i,j}-||{p_i(\varepsilon)-p_j(\varepsilon)}||,$$ 
for $ 1 \leq  i < j \leq n-k$. We claim that the image of $\varphi$ is contained in $[0, 1]^N$. Indeed, by using the estimates on the norm we have
$$\varphi_{i,j}(\varepsilon) =1+\varepsilon_{i,j}-||{p_i(\varepsilon)-p_j(\varepsilon)}|| \geq 1+\varepsilon_{i,j}-||{p_i(\varepsilon)-p_j(\varepsilon)}||_{\infty}=0,$$
$$\varphi_{i,j}(\varepsilon) =1+\varepsilon_{i,j}-||{p_i(\varepsilon)-p_j(\varepsilon)}|| \leq 1+\varepsilon_{i,j}-\frac{1}{2}||{p_i(\varepsilon)-p_j(\varepsilon)}||_{\infty} = \frac{1}{2}(1+\varepsilon_{i, j}) \leq 1.$$
 It is clear that the mappings $p_j(\varepsilon)$ are continuous and therefore $\varphi$ is also a continuous mapping. Thus, the Brouwer Fixed Point Theorem gives the existence of a point $\varepsilon' \in [0, 1]^N $ such that $\varphi(\varepsilon') = \varepsilon'$ and consequently  $||p_i(\varepsilon') - p_j(\varepsilon')|| = 1$ for $1 \leq i < j \leq m$. This gives an equilateral set of $n-k$ elements and the result follows.

\end{proof}

\section{concluding remarks}

In the preceeding sections we gave some evidence on Conjecture \ref{dimension} by confirming it in some well-known classes of normed spaces. We close this paper by offering some naturally arising questions and problems for further research.

In section \ref{symetryczne} we proved Theorem \ref{dimension} for the spaces, whose groups of isometries contain the permutation of coordinate variables. It is reasonable to ask about some other natural condition on the group of isometries that would be enough to obtain an equilateral set of large cardinality. The class of $1$-unconditional spaces comes to mind.

\begin{problem}
Prove that for any $n$-dimensional $1$-unconditional space $X$ we have $e(X) \geq n+1$.
\end{problem}

We remark that in fact there are some open problems of convex geometry which can be established under the assumption of $1$-unconditionallity. A good example is a Mahler conjecture on the volume of convex bodies (see \cite{mahler}).

In the theory of classical Orlicz spaces there is also another norm extensively studied. It is called the \emph{Amemiya-Orlicz norm} and in the finite dimensional setting it can be defined by the formula
$$||x|| = \inf_{\lambda > 0} \frac{1}{\lambda} \left ( \sum_{i=1}^{n} f(\lambda |x_i|) + 1 \right ),$$
where $f_1, f_2, \ldots, f_n$ are Young functions. It would be interesting to know if Conjecture \ref{dimension} could be deduced for the Amemiya-Orlicz norms.

\begin{problem}
Prove that for any $n$-dimensional space $X$, endowed with the an Amemiya-Orlicz norm, we have $e(X) \geq n+1$.
\end{problem}

Note that Theorem \ref{swanepoelvilla2} of Swanepoel and Villa concerning the $\ell^n_p$ spaces does not include the case of $\ell_1^n$ space. Moreover, it also clear that $\ell_1^n$ fails to satisfy the conditions of our generalizations (Theorems \ref{symm}, \ref{orlicz2}). One could ask about an analogue of Theorems \ref{swanepoelvilla2}, \ref{symm}, \ref{orlicz2} in the case of this familiar space.

\begin{problem}
Prove that there exists $R>1$ such that $e(X) \geq n$ for any $n$-dimensional normed space $X$ satisfying $d(\ell_1^n, X) \leq R$.
\end{problem}

As mentioned in the introduction, to prove Conjecture \ref{dimension} it would be sufficient to prove it for every subspace of the space $\ell_{\infty}^n$ for all $n$. In Section \ref{podprz} we handled the case of subspaces of codimension $1$. Already the case of subspaces of codimension $2$ seems to be much more delicate.

\begin{problem}
Prove that for any $(n-2)$-dimensional subspace $X$ of the space $\ell_{\infty}^n$ we have $e(X) \geq n-1$.
\end{problem}

There are many other naturally arising classes of normed spaces and convex bodies in which the problems of equilateral sets could be further investigated.

\section*{Acknowledgements}
I thank the anonymous referees for very careful proofreading and suggestions which significantly improved quality of the paper.

\bibliographystyle{amsplain}

\end{document}